\numberwithin{equation}{section}
\theoremstyle{plain}
\newtheorem{theorem}{Theorem}[section]
\newtheorem{proposition}[theorem]{Proposition}
\newtheorem{lemma}[theorem]{Lemma}
  \theoremstyle{remark}
\newtheorem{remark}[theorem]{Remark}
  \theoremstyle{definition}
\newtheorem{definition}[theorem]{Definition}
\def\R{\mathbb{R}}
\def\N{\mathbb{N}}
\title[Multiple solutions for $(p,q)$-Laplacian problems]{Multiple solutions for asymptotically $q$-linear $(p,q)$-Laplacian problems}
\author{Francesca Colasuonno}
\address{Dipartimento di Matematica \newline\indent
Alma Mater Studiorum Universit\`a di Bologna
\newline\indent
Piazza di Porta San Donato 5 - 40126 Bologna, Italy}
\email{francesca.colasuonno@unibo.it}
\begin{document}
	
%\date{\today}

\subjclass[2010]{35J20, 35J62, 35P30, 35Q60, 47J30}

\keywords{$(p,q)$-Laplacian problems, asymptotically $q$-linear problems, variational methods, resonant problems, multiplicity of solutions.}

\begin{abstract}
We investigate the existence and the multiplicity of solutions of the problem
$$
\begin{cases}
-\Delta_p u-\Delta_q u = g(x, u)\quad & \mbox{in }  \Omega,\\
\displaystyle{u=0}  & \mbox{on }  \partial\Omega,
\end{cases}
$$
where $\Omega$ is a smooth, bounded domain of $\R^N$, $1<p<q<\infty$, and the nonlinearity $g$ behaves as $u^{q-1}$ at infinity. 
We use variational methods and find multiple solutions as minimax critical points of the associated energy functional. Under suitable assumptions on the nonlinearity, we cover also the resonant case.
\end{abstract}

\maketitle

\section{Introduction}
We consider the following Dirichlet $(p,q)$-Laplacian problem
\begin{equation}\label{eq:main}
\begin{cases}
-\Delta_p u-\Delta_q u = g(x, u)\quad & \mbox{in }  \Omega,\\
\displaystyle{u=0}  & \mbox{on }  \partial\Omega,
\end{cases}
\end{equation}
where $\Omega$ is a bounded domain of $\R^N$ ($N\geq 2$) with Lipschitz boundary $\partial\Omega$, $1<p<q<\infty$, and the nonlinearity $g(x,\cdot)$ is asymptotically $q$-linear, meaning that $g(x,t)\sim \ell_\infty |t|^{q-2}t$ as $|t|\to\infty$ for some constant $\ell_\infty$. 

Some of the difficulties arising in the study of this problem come from the non-homogeneity of the operator $-\Delta_p-\Delta_q$. The interest in $(p,q)$-Laplacians and, in general, in non-homogeneous operators has considerably increased since the seminal papers \cite{Marcellini, Marcellini2} by Marcellini, in late eighties, on the regularity of minimizers of the so-called functionals with {\it non-standard growth}. In this setting, both $(p,q)$ and $p(x)$-growth conditions have been widely cosidered, see \cite{DHHR} and the references therein for a comprehesive monograph on the Lebesgue and Sobolev spaces involved in the $p(x)$ variable exponent case. More recently, in \cite{BCM,ColMin}, many others progresses were achieved in the study of energy functionals related to the operators $-\mathrm{div}(|\nabla u|^{p-2}\nabla u+a(x)|\nabla u|^{q-2}\nabla u)$. In these papers, the weight function $a(x)\ge 0$ switches two different elliptic behaviors, justifying the name of {\it double phase} functionals. These functionals were first introduced by Zhikov in \cite{Z} to provide models for strongly anisotropic materials. In that setting, the exponents $p$ and $q$ cannot be too far from each other, the bound $q/p<(1+\alpha/N)$ for some $\alpha\in(0,1]$ is needed both to develop a regularity theory and to prove some classical inequalities like a Poincar\'e-type one, in suitable Orlicz-Sobolev spaces, see \cite[Remark 2.19]{CSq2}. 
On the other hand, $(2,q)$-Laplacian operators naturally arise in the Born-Infeld theory of nonlinear electromagnetism, where the leading operator, i.e., the Minkowski-curvature operator $-\mathrm{div}\left(\frac{\nabla u}{\sqrt{1-|\nabla u|^2}}\right)$, can be approximated by a truncated series of $2h$-Laplacians, $h\in\N$, cf. \cite{DFJ,BDP,FOP}. In this case, the highest exponent in the truncated series should morally go to infinity, and so no bounds on $q/2$ are admissible.
In the same spirit, in the problem under consideration, the exponents $p$ and $q$ can be arbitrarily far. We observe that all the arguments in this paper can be adapted to operators of the form $-\mathrm{div}(|\nabla u|^{p-2}\nabla u+a(x)|\nabla u|^{q-2}\nabla u)$, with the weight function $a(x)$ satisfying $a\in C^1(\bar\Omega)$, $a>0$ (it is enough to endow the functional space $W^{1,q}_0(\Omega)$ with the equivalent norm $\|a(x)\nabla u\|_q$). On the contrary, things change a lot for the more general case in which $a(x)$ can vanish somewhere, this case should be treated in suitable Orlicz-Sobolev spaces, the above mentioned bound on $q/p$ should be required, and, in particular, being $p(1+1/N)<p^*$, the exponent $q$ should be taken $p$-subcritical. While in our setting, since $a\equiv 1$ never vanishes, we are allowed not to require any relation between $q$ and $p$ (note that $q>p$ is not an assumption, since the roles of $p$ and $q$ are interchangeable). 
\smallskip 

Let us now introduce in details the hypotheses required on the nonlinearity of the problem. We assume that there exist $\ell_\infty\in \R$ and $f:\Omega\times\R\rightarrow \R$ such that
\begin{equation}\label{eq:nonlinearity}
g(x,t) = \ell_\infty |t|^{q-2}t + f(x,t),
\end{equation}
so that the problem can be written as
\begin{equation}\label{eq:main-f}
\begin{cases}
-\Delta_p u-\Delta_q u = \ell_\infty |u|^{q-2}u + f(x,u)\quad & \mbox{in }  \Omega,\\
\displaystyle{u=0}  & \mbox{on }  \partial\Omega.
\end{cases}
\end{equation}

In what follows, we denote by $\sigma(-\Delta_q)$ the spectrum of the Dirichlet $q$-Laplacian operator, namely the set of $\lambda's$ in $\mathbb R$ for which the problem
$$
\begin{cases}
-\Delta_q u=\lambda |u|^{q-2}u \quad&\mbox{in }\Omega,\\
u=0&\mbox{on }\partial\Omega
\end{cases}
$$
has a nontrivial weak solution. 
The nonlinearity $f$ satisfies the following assumptions:
\begin{itemize}
\item[$(f)$] 
$f$ is a Carath\'eodory function (i.e., $f(\cdot,t)$ is measurable in $\Omega$
 for all $t\in\R$ and $f(x,\cdot)$ is continuous in $\R$ for a.e. $x\in \Omega$) 
 and $\max_{|t|\leq R}|f(\cdot,t)|\in L^\infty(\Omega)$ for all $R>0$;
\item[$(f_\infty)$] $\lim_{|t|\to \infty}\frac{f(x,t)}{|t|^{q-2}t} = 0$ uniformly in $x \in \Omega$;
\item[$(f_\mathrm{cpt})$] one of the two assumptions holds: 
\begin{itemize}
\item[$(f_\mathrm{nr})$] $\ell_\infty\not\in\sigma(-\Delta_q)$;
\item[$(f_\mathrm{r})$] $\ell_\infty\in\sigma(-\Delta_q)$ and $\lim_{|t|\to \infty}(f(x,t)t-qF(x,t)) = +\infty$ uniformly in $x \in \Omega$, where $F(x,t):=\int_0^t f(x,s)ds$; 
\end{itemize}
\item[$(f_0)$] one of the two assumptions holds:
\begin{itemize}
\item[$(f_0^-)$] $\lim_{t\to 0}\frac{f(x,t)}{|t|^{q-2}t} =: \ell_0\in [-\infty,0]$ uniformly in $x \in \Omega$;
\item[$(f_0^+)$] $\lim_{t\to 0}\frac{f(x,t)}{|t|^{p-2}t} =: \ell'_0\in (0,\infty]$ uniformly in $x \in \Omega$;
\end{itemize}
\item[$(f_\mathrm{sym})$] $f(x,\cdot)$ is odd for a.e. $x\in\Omega$.
\end{itemize}
\smallskip 

Problem \eqref{eq:main-f} has a variational structure, so the solutions are found as critical points of the associated energy functional $I:W^{1,q}_0(\Omega)\to \R$ defined as
$$
I(u)=\frac{1}{p}\int_\Omega|\nabla u|^p dx+\frac{1}{q}\int_\Omega|\nabla u|^q dx-\frac{\ell_\infty}{q}\int_\Omega |u|^q dx -\int_\Omega F(x,u)dx,
$$
cf. Subsection \ref{subsec:varset}. 
We observe that, beyond the non-homogeneity of the operator, another feature of the problem that makes the analysis more interesting is the behavior of the nonlinearity at infinity. Indeed, the asymptotic $q$-linearity of $g(x,\cdot)$ in particular implies that the Ambrosetti-Rabinowitz condition --which is responsible for Palais-Smale sequences to be bounded-- is not satisfied. This prevents the use of the classical Mountain Pass or Symmetric Mountain Pass Theorem for the existence of solutions to problem \eqref{eq:main}, see the Introduction of \cite{LZ} for interesting comments on the topic. For existence results in the case of $q$-superlinear and subcritical nonlinearities in the $(p,q)$-setting, we refer to \cite{PS} in bounded domains, and to \cite{BCS2} in the whole space $\R^N$.

Due to the behavior of $g(x,\cdot)$ at infinity, one can expect some interaction with the spectrum of the $q$-Laplacian, cf. \cite{BCS,LZ}. In particular, when $\ell_\infty$ is an eigenvalue of $-\Delta_q$, a stronger assumption on $f$ is needed to get compactness, see $(f_\mathrm{r})$. Furthermore, due to the symmetry condition $(f_\mathrm{sym})$, the energy functional $I$ is even, so that if $u$ is a critical point of $I$ at some critical value $c$, also $-u$ has the same property. This is the reason why in the statement of the main theorem below, we always refer to {\it pairs} of solutions. 
We believe that even without the symmetry condition $(f_\mathrm{sym})$, it is possible to obtain existence results by applying a Linking Theorem as in \cite{BCS}.

In Section \ref{sec:sec2}, we will introduce the definitions of two suitable sequences $(\eta^{(\alpha)}_h), (\nu^{(\alpha)}_h)$ of quasi-eigenvalues related to the $(p,q)$-Laplacian operator, 
%of the problem 
%$$
%\begin{cases}
%-\alpha \Delta_p u-\Delta_q u=\lambda|u|^{q-2}u\quad&\mbox{in }\Omega, \\
%u=0&\mbox{on }\partial\Omega,
%\end{cases}
%$$
with $\alpha=0$ or 1.
For the exact definitions of $(\eta^{(\alpha)}_h), (\nu^{(\alpha)}_h)$, we refer to \eqref{eq:etah} and \eqref{eq:nuh-def}, respectively.
These sequences play an important role in the proof of the geometry conditions for the functional $I$. 

We are now ready to state our main result.

\begin{theorem}\label{thm:main}
Assume that  $(f)$, $(f_\infty)$, $(f_\mathrm{cpt})$, and $(f_\mathrm{sym})$ hold. Suppose further that one of the following assumptions is fulfilled:
\begin{itemize}
\item[$(H_-)$:] $(f_0^-)$ holds and there exist $h,k\in\N$, with $k\ge h$, such that $\ell_0+\ell_\infty<\eta^{(0)}_h$ and $\ell_\infty>\nu^{(0)}_k$;
\item[$(H_+)$:] $(f_0^+)$ holds and there exist $h,k\in\N$, with $k\ge h$, such that $\ell_\infty<\eta_h^{(0)}$ and $\ell'_0>\nu_k^{(1)}$.
\end{itemize}
Then, problem \eqref{eq:main-f} has at least $k-h+1$ distinct pairs of non-trivial solutions.
\end{theorem}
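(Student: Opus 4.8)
The plan is to set up the variational framework, verify the Palais--Smale (or Cerami) condition, establish the min--max geometry using the two sequences of quasi-eigenvalues, and then invoke an equivariant critical point theorem based on the Krasnoselskii genus to count pairs of nontrivial solutions.

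First I would record the variational structure: the energy functional is
\[
I(u) = \frac{1}{p}\|\nabla u\|_p^p + \frac{1}{q}\|\nabla u\|_q^q - \frac{\ell_\infty}{q}\|u\|_q^q - \int_\Omega F(x,u)\,dx,
\]
which, thanks to $(f)$, $(f_\infty)$ and the Sobolev embeddings on $W^{1,q}_0(\Omega)$ (here no relation between $p$ and $q$ is needed since $q>p$ gives $W^{1,q}_0\hookrightarrow W^{1,p}_0$), is well defined and $C^1$, even by $(f_\mathrm{sym})$, with critical points solving \eqref{eq:main-f}. Next I would prove a compactness condition for $I$: boundedness of a Palais--Smale (Cerami) sequence $(u_n)$ follows from $(f_\infty)$ by the standard rescaling argument --- dividing by $\|u_n\|$, one shows the normalized limit is a $q$-eigenfunction at level $\ell_\infty$, which is excluded outright under $(f_\mathrm{nr})$, and under $(f_\mathrm{r})$ is ruled out by the Landesman--Lazer-type condition $f(x,t)t-qF(x,t)\to+\infty$ tested against the (a.e. nonzero) limiting eigenfunction. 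Once $(u_n)$ is bounded, the $(S_+)$ property of $-\Delta_p-\Delta_q$ on $W^{1,q}_0(\Omega)$ gives a strongly convergent subsequence.

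Then I would verify the linking/genus geometry. Using the eigenvalue-type inequality $0<\ell_0+\ell_\infty<\eta^{(0)}_h$ (case $(H_-)$) or $\ell_\infty<\eta^{(0)}_h$ (case $(H_+)$), together with $(f_0)$ and $(f_\infty)$ to control $F$ near $0$ and near $\infty$, I would exhibit a finite-codimensional subspace (or a sphere in a suitable subspace associated to the first $h-1$ quasi-eigenvalues) on which $I$ is bounded below by a positive constant on a small sphere $\partial B_\rho$; the quasi-eigenvalue $\eta^{(\alpha)}_h$ is tailored precisely so that this coercivity estimate works despite the non-homogeneity, handling the $p$-part via the parameter $\alpha$. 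Symmetrically, using $\ell_\infty>\nu^{(0)}_k$ (case $(H_-)$) or $\ell_\infty+\ell_0>\frac{q}{p}\nu^{(1)}_k$ (case $(H_+)$), I would find a family of finite-dimensional subspaces $V_k\supset\cdots\supset V_h$, with $\dim V_j$ increasing in $j$, on which $I$ is bounded above by a constant (indeed $I\to-\infty$ on $V_k$), again exploiting the definition of $\nu^{(\alpha)}_h$ as the right comparison quantity for the non-homogeneous functional; the factor $q/p$ in $(H_+)$ comes from comparing the $\frac1p\|\nabla\cdot\|_p^p$ term with a $q$-homogeneous Rayleigh quotient. Combining these with the evenness of $I$ and the PS condition, an abstract symmetric min--max theorem --- of the type: if an even $C^1$ functional satisfies PS, is positive on a small sphere in a space of codimension $h-1$, and is nonpositive (or $\to-\infty$) on a space of dimension $k$, then it has at least $k-h+1$ pairs of nontrivial critical points (via genus, e.g. the Clark-type / fountain-type theorems) --- yields the conclusion.

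The main obstacle, and the place where the bulk of the work lies, is the geometry step rather than compactness: because $-\Delta_p-\Delta_q$ is not homogeneous, one cannot directly use the variational eigenvalues of $-\Delta_q$, and the estimates on small and large spheres must be done with the quasi-eigenvalues $\eta^{(\alpha)}_h,\nu^{(\alpha)}_h$ introduced in Section~\ref{sec:sec2}; getting the precise thresholds $\eta^{(0)}_h$, $\nu^{(0)}_k$, $\frac{q}{p}\nu^{(1)}_k$ to line up with the hypotheses $(H_\pm)$ --- in particular controlling the lower-order $p$-term uniformly on the relevant subspaces and absorbing the error terms produced by $(f_0)$ and $(f_\infty)$ --- is the delicate part. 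A secondary technical point is ensuring the critical points produced are genuinely nontrivial, which follows because the critical values are strictly positive (on the small sphere) and bounded away from the value $I(0)=0$.
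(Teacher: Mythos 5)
Your overall architecture (variational setting, Cerami condition via the $(S_+)$ property, quasi-eigenvalue geometry, abstract genus/pseudo-index theorem giving $k-(h-1)$ pairs) matches the paper, and your treatment of case $(H_-)$ is essentially the paper's. Two remarks are in order. First, on compactness in the resonant case: you propose the classical route of normalizing, identifying the weak limit $w\neq 0$ as a $q$-eigenfunction, and then using Fatou's lemma on $\{w\neq 0\}$ against $f(x,t)t-qF(x,t)\to+\infty$ in the identity $qI(u_n)-\langle I'(u_n),u_n\rangle=(\tfrac qp-1)\|\nabla u_n\|_p^p+\int_\Omega(f(x,u_n)u_n-qF(x,u_n))\,dx$. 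This works (the extra $p$-term has a favorable sign), but it is a genuinely different argument from the paper's, which never normalizes in the resonant case: it instead uses $(f_\mathrm{r})$ to bound $|\{|u_n|\ge T\}|$ from above and then absorbs $\int_{\{|u_n|\ge T\}}|u_n|^q$ into $\|\nabla u_n\|_q^q$ via H\"older and Sobolev, obtaining boundedness directly. Either route is acceptable.

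The genuine gap is in the geometry for case $(H_+)$: you have assigned the two spectral conditions to the wrong estimates. You claim that $\ell_\infty<\eta_h^{(0)}$ yields $I\ge c_0>0$ on a small sphere in a codimension-$(h-1)$ subspace, and that $\ell_\infty+\ell_0>\tfrac qp\nu_k^{(1)}$ yields $I\to-\infty$ on a $k$-dimensional subspace. Neither follows from $(H_+)$. Near zero the relevant coefficient is $\ell_\infty+\ell_0$ (which under $(H_+)$ may be arbitrarily large, even $+\infty$), so no positive lower bound for $I$ on a small sphere is available; at infinity the relevant coefficient is $\ell_\infty$ alone, and $(H_+)$ gives no lower bound of the form $\ell_\infty>\nu_k$, so $I$ need not tend to $-\infty$ on any $k$-dimensional subspace. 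The correct geometry is dual: one applies the abstract theorem to $-I$ (the ``resp.''\ version, with the roles of $V$ and $W$ interchanged). The condition $\ell_\infty+\ell_0>\tfrac qp\nu_k^{(1)}$ controls the behavior \emph{near zero} and gives $-I\ge c_0$ on $S_\rho\cap V$ for a $k$-dimensional $V$ realizing $\nu_k^{(1)}$ (here the inequality $\tfrac1p(\|\nabla u\|_p^p+\|\nabla u\|_q^q)\le\tfrac1p(\nu_k^{(1)}+\varepsilon)\|u\|_q^q$ is where the factor $q/p$ and the parameter $\alpha=1$ enter), while $\ell_\infty<\eta_h^{(0)}$ controls the behavior \emph{at infinity} and makes $I$ coercive on the codimension-$(h-1)$ subspace $W_{h-1}^{(0)}$, i.e.\ $-I\le c_\infty$ there. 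In particular the critical values produced in case $(H_+)$ are negative, not positive as you assert at the end. As written, your argument for $(H_+)$ would not close; it needs to be replaced by this dual construction.
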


All the solutions found are minimax critical points of the associated energy functional.
The proof of our main theorem is based on an abstract result proved in \cite{BBF,BCS} by using the pseudo-index theory  related to the Krasnosel'ski\u{\i} genus, see Theorem~\ref{thm:multiplicity}. In order to apply this result, we prove that the energy functional $I$ satisfies a compactness assumption, the Cerami's weaker variant of the Palais-Smale condition, both in the non-resonant case and in the resonant one under $(f_\mathrm{r})$, cf. Lemma \ref{lem:CPS}. In the proof of this lemma, neither the symmetry nor the behavior near zero of $f$ enter at all. Moreover, as usual, in order to find multiple minimax critical points, it is also needed to show that the energy functional $I$ has the right geometry. This part of the proof involves the behavior of the nonlinearity both at infinity and at zero, and is responsible for the assumptions on $\ell_\infty$, $\ell_0$, and $\ell'_0$ in $(H_-)$ and $(H_+)$.

Some remarks on the statement are now in order. In both cases $(H_-)$ and $(H_+)$, the larger $|\ell_0|$, $\ell'_0$, the higher the possibility of finding solutions. In fact, in the limit cases $|\ell_0|=\ell'_0=\infty$, we get the highest number of solutions. If $\ell_0=-\infty$, certainly $\ell_0+\ell_\infty<\eta^{(0)}_1$, hence \eqref{eq:main-f} admits at least $k$ pairs of distinct solutions. 
If $\ell'_0=\infty$, certainly $\ell'_0>\nu^{(1)}_k$ is satisfied for every $k\in\N$. Moreover, being $(\eta_h^{(0)})$ divergent (see Proposition \ref{prop:etahdiv}), there always exists $\bar h\in \N$ for which $\ell_\infty<\eta_{\bar h}^{(0)}$. Therefore, in this case \eqref{eq:main-f} admits infinitely many pairs of distinct solutions. 

On the other hand, it is not clear whether the case $\ell_0=0$ can be covered, see Remarks \ref{rmk:remark-eta1} and \ref{rmk:lastremark}. The cases $\ell_0=-\infty$ and $\ell'_0<\infty$ include the $p$-behavior at zero. We refer to \cite{BT,BT2,MM}, for problems with $f\sim |t|^{p-2}t$ in a neighborhood of 0, and the so-called two-parameter eigenvalue problem for the $(p,q)$-Laplacian.

Finally, let $\lambda_1(q)$ be the first eigenvalue of the Dirichlet $q$-Laplacian. Being $\nu^{(0)}_k\ge\nu^{(0)}_1=\lambda_1(q)$ (see Subsection \ref{subsubsec:quasieig}), condition $(H_-)$ is never satisfied if $\ell_\infty \le \lambda_1(q)$. Similarly, since $\nu_1^{(1)}\ge \lambda_1(p)$, with $\lambda_1(p)$ the first eigenvalue of the Dirichlet $q$-Laplacian, condition $(H_+)$ is never satisfied if $\ell'_0 \le \lambda_1(p)$, this is coherent with the non-existence result in \cite[Proposition 1]{BT}.

The paper is organized as follows. In Section \ref{sec:sec2}, we present the abstract result that we will apply, and introduce the variational setting and the two sequences of quasi-eigenvalues $(\eta_h^{(\alpha)})$ and $(\nu_h^{(\alpha)})$. In Section \ref{sec:sec3}, we prove the multiplicity result through the intermediate steps of showing that the energy functional satisfies the compactness condition and has the right geometry.

\section{Preliminary results}
\label{sec:sec2}

\subsection{Abstract results}
\begin{definition}
Let $X$ be a Banach space. A $C^1$-functional $I:X\to\R$ satisfies the Cerami-Palais-Smale condition ({\it (CPS)-condition} for short) if every sequence $(u_n)\subset X$ such that 
$$
I(u_n)\to c\in \R\quad\mbox{and}\quad \|I'(u_n)\|_{X'}(1+\|u_n\|_X)\to 0 \mbox{ as }n\to\infty
$$ 
admits a convergent subsequence. 
\end{definition}
 
We will apply the following multiplicity result, see \cite[Theorem 2.9]{BBF} for a proof in Hilbert spaces and \cite[Theorems 2.6 and 2.7]{BCS} for Banach spaces. In particular, for the version that appears as {\it (resp. ...)} in the statement, we refer to \cite[Remark~2.8]{BCS}. 

\begin{theorem}\label{thm:multiplicity}
Let $X$ be a Banach space and for $\rho>0$ denote $S_\rho:=\{u\in X\,:\,\|u\|_X=\rho\}$. Suppose that the functional $I\in C^1(X,\R)$ satisfies the following properties 
\begin{list}{(\roman{enumi})}{\usecounter{enumi}\labelwidth  5em
\itemsep 0pt \parsep 0pt}
\item $I$ is even;
\item $I$ satisfies $(CPS)$ in $(0,\infty)$, and $I(0)\ge 0$;
\item there exist two closed subspaces  $V,\,W\subset X$ such that $\mathrm{dim}V<\infty$ and $\mathrm{codim}W<\infty$, and two constants $c_\infty>c_0> I(0)$ for which the following assumptions hold 
\begin{itemize}
\item[\em{(a)}] $I(u)\ge c_0$ for every $u\in S_\rho\cap W$ (resp. for every $u\in S_\rho\cap V$);
\item[\em{(b)}] $I(u)\le c_\infty$ for every $u\in V$ (resp. for every $u\in W$).
\end{itemize}
\end{list}
If furthermore $\mathrm{dim} V>\mathrm{codim} W$, then $I$ possesses at least $m=\mathrm{dim}V -\mathrm{codim}W$ distinct pairs of critical points, whose corresponding critical values belong to $[c_0,c_\infty]$.
\end{theorem}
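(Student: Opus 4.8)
The plan is to prove the theorem by the symmetric minimax machinery based on the Krasnosel'ski\u{\i} genus and Benci's pseudo-index, the hypothesis $(CPS)$ entering only through a deformation lemma. I describe the first of the two alternatives in (a)--(b); the \emph{resp.}\ version follows by interchanging the roles of $V$ and $W$ and dualizing the pseudo-index, as indicated in \cite[Remark~2.8]{BCS}. Note first that $c_0>I(0)\ge0$ forces $c_0>0$, so the whole range $[c_0,c_\infty]$ lies in $(0,\infty)$ where $(CPS)$ is available, and any critical point at a level $\ge c_0>I(0)$ is automatically nontrivial. Let $\Sigma$ be the family of closed symmetric sets $A=-A\subset X\setminus\{0\}$ and $\gamma:\Sigma\to\N\cup\{0,\infty\}$ the genus, of which I use the standard properties: monotonicity under odd continuous maps, subadditivity, invariance under odd homeomorphisms, the neighborhood property on compact sets, the normalization $\gamma(S_\rho\cap Y)=\dim Y$ for finite-dimensional $Y$, and the codimension estimate $\gamma(A)\le\gamma(A\cap W)+\mathrm{codim}\,W$. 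Since $I$ is even, symmetrizing a pseudo-gradient yields an \emph{odd} pseudo-gradient field, so the associated flow maps are odd. Following \cite{BBF,BCS}, I fix the reference set $S_\rho\cap W$, let $\Gamma$ be the group of odd homeomorphisms of $X$ generated by the time-one maps of the admissible (odd, $I$-nonincreasing) deformations, and set
$$
i^*(A):=\min_{h\in\Gamma}\gamma\big(h(A)\cap(S_\rho\cap W)\big),\qquad A\in\Sigma .
$$
Because $\Gamma$ is a group of genus-preserving odd homeomorphisms, $i^*$ is monotone and satisfies $i^*(h(A))=i^*(A)$ for every $h\in\Gamma$; I then define
$$
c_j^*:=\inf_{\substack{A\in\Sigma\\ i^*(A)\ge j}}\ \sup_{u\in A}I(u),\qquad j=1,\dots,m .
$$

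The second step is the two-sided bound $c_0\le c_1^*\le\cdots\le c_m^*\le c_\infty$. Monotonicity of $i^*$ gives the chain $c_1^*\le\cdots\le c_m^*$ at once. For the lower bound I prove the \emph{linking lemma}: if $i^*(A)\ge1$ then, taking $h=\mathrm{id}\in\Gamma$ in the definition, $\gamma(A\cap(S_\rho\cap W))\ge1$, so $A\cap(S_\rho\cap W)\ne\emptyset$; combined with (a) this yields $\sup_A I\ge c_0$, hence $c_j^*\ge c_0$ for $1\le j\le m$. For the upper bound I prove the \emph{normalization lemma} $i^*(S_\rho\cap V)\ge m=\dim V-\mathrm{codim}\,W$: for every $h\in\Gamma$ the odd-homeomorphic image $h(S_\rho\cap V)$ has genus $\dim V$, and one shows via the codimension estimate together with a Borsuk--Ulam--type intersection argument that its intersection with $S_\rho\cap W$ retains genus at least $\dim V-\mathrm{codim}\,W$. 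Since $S_\rho\cap V\subset V$, assumption (b) gives $\sup_{S_\rho\cap V}I\le c_\infty$, whence $c_m^*\le c_\infty$.

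In the third step I extract critical points. Because $(CPS)$ holds on $(0,\infty)\supset[c_0,c_\infty]$, the even deformation lemma for the Cerami condition applies: at a regular value $c\in[c_0,c_\infty]$ there is an odd, $I$-nonincreasing $\eta\in\Gamma$ mapping $\{I\le c+\varepsilon\}$ into $\{I\le c-\varepsilon\}$ for suitable $\varepsilon>0$. Choosing $A$ with $i^*(A)\ge j$ and $\sup_A I<c_j^*+\varepsilon$, the invariance $i^*(\eta(A))=i^*(A)\ge j$ together with $\sup_{\eta(A)}I\le c_j^*-\varepsilon$ contradicts the definition of $c_j^*$; hence each $c_j^*$ is a critical value. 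If consecutive levels coincide, $c_j^*=\cdots=c_{j+r-1}^*=:c$, the standard argument (neighborhood property of $\gamma$ plus the deformation lemma applied off a symmetric neighborhood of the critical set) gives $\gamma(K_c)\ge r$ for $K_c=\{u:I(u)=c,\ I'(u)=0\}$; since any finite union of antipodal pairs has genus $1$, $\gamma(K_c)\ge r\ge2$ forces $K_c$ to contain at least $r$ distinct pairs. Summing over the distinct values among $c_1^*,\dots,c_m^*$ produces at least $m$ distinct pairs $\{u,-u\}$ of critical points, all with $I(u)\in[c_0,c_\infty]$ and $u\ne0$.

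The heart of the argument, and the step I expect to be the main obstacle, is the pseudo-index normalization $i^*(S_\rho\cap V)\ge\dim V-\mathrm{codim}\,W$ of the second step: one must combine the genus codimension estimate with a Borsuk--Ulam intersection so as to extract exactly $m$ dimensions from the linking of the sphere $S_\rho\cap W$ with the subspace $V$, keeping the intersection on the sphere while losing only $\mathrm{codim}\,W$ (and not one unit more). A secondary technical point is the construction of the \emph{odd} deformation under the Cerami rather than the Palais--Smale condition, where the pseudo-gradient flow must be weighted by $(1+\|u\|)$ in order to exploit the factor $(1+\|u\|_X)$ present in $(CPS)$.
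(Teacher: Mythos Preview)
The paper does not give its own proof of this theorem: it is stated as a tool and the reader is referred to \cite[Theorem~2.9]{BBF} and \cite[Theorems~2.6, 2.7 and Remark~2.8]{BCS}, where the result is established precisely via the pseudo-index theory related to the Krasnosel'ski\u{\i} genus (as the paper itself recalls in the Introduction). Your proposal follows exactly that route---Benci's pseudo-index relative to $S_\rho\cap W$, the linking/normalization estimate $i^*(S_\rho\cap V)\ge \dim V-\mathrm{codim}\,W$, and the even deformation lemma under $(CPS)$---so it is correct and aligned with the approach the paper cites.
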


\subsection{Variational Setting}\label{subsec:varset}
Throughout the paper, for $1\le r\le\infty$, we denote with $\|\cdot\|_r$, the usual norm in the Lebesgue space $L^r(\Omega)$. 
We look for solutions of \eqref{eq:main-f} in the Sobolev space $W^{1,q}_0(\Omega)$ endowed with the equivalent norm 
$$
\|u\|:=\|\nabla u\|_q.
$$ 

\begin{definition}
A function $u\in W^{1,q}_0(\Omega)$ is a weak solution of \eqref{eq:main-f} if for every $\varphi\in W^{1,q}_0(\Omega)$ the following distributional identity holds 
$$
\int_\Omega (|\nabla u|^{p-2}+|\nabla u|^{q-2})\nabla u\nabla \varphi dx=\ell_\infty\int_\Omega |u|^{q-2}u\varphi dx +\int_\Omega f(x,u)\varphi dx.
$$
\end{definition}
We observe that, due to the boundedness of $\Omega$, all the integrals above are finite. Indeed, for every  $r<q$, by H\"older's inequality we have 
\begin{equation}\label{eq:embed-Wq-in-Wr}
\int_\Omega|v|^r dx\le |\Omega|^{\frac{q-r}{q}}\|v\|_q^r\quad\mbox{for every }v\in L^q(\Omega).
\end{equation}

The problem \eqref{eq:main-f} has a variational structure, its associated energy functional 
$I:W^{1,q}_0(\Omega)\to\R$ is defined as follows for every $u\in W^{1,q}_0(\Omega)$
$$
I(u)=\frac{1}{p}\int_\Omega|\nabla u|^p dx+\frac{1}{q}\int_\Omega|\nabla u|^q dx-\frac{\ell_\infty}{q}\int_\Omega |u|^q dx -\int_\Omega F(x,u)dx.
$$
It is straightforward to verify that $I$ is of class $C^1$ and that $u\in W^{1,q}_0(\Omega)$ is a weak solution of \eqref{eq:main-f} if and only if it is a critical point of $I$.
 
Moreover, for future use, we introduce the Sobolev critical exponent for the embedding $W^{1,q}(\Omega)\hookrightarrow L^r(\Omega)$ to be 
$$
q^*:=
\begin{cases}
\frac{Nq}{N-q}\quad&\mbox{if } N>q,\\
+\infty&\mbox{if }N\le q
\end{cases} 
$$
the conjugate exponent $q'$ of $q$, and the dual space of $(W^{1,q}_0(\Omega))'=:W^{-1,q'}(\Omega)$, with its operatorial norm denoted by $\|\cdot\|_{-1,q'}$.

\subsection{Two sequences of quasi-eigenvalues}\label{subsubsec:quasieig}
Inspired by \cite{CP} and \cite{LZ}, we define below two sequences, denoted by $(\eta^{(\alpha)}_h)$ and $(\nu^{(\alpha)}_h)$, of quasi-eigenvalues for a $(p,q)$-Laplacian-type operator. Compared with the arguments in \cite{CP,LZ} for the classical $p$-Laplacian, here the arguments are slightly more delicate due to the lack of homogeneity of the operator.
 
Since in this subsection $\alpha\ge 0$ is fixed, for simplicity in notation, throughout this subsection we will drop the superscript $(\alpha)$ and denote the sequences simply by $(\eta_h)$ and $(\nu_h)$.
\bigskip

For $\alpha\in[0,+\infty)$, let us define the $C^1$-functional $\Phi :W^{1,q}_0(\Omega)\to\mathbb R$ as
$$
\Phi(u) =\alpha\|\nabla u\|_p^p+\|\nabla u\|_q^q\quad\mbox{for every }u\in W^{1,q}_0(\Omega).
$$

We are now ready to construct the sequence $(\eta_h)$ and its corresponding sequence of quasi-eigenfunctions $(\varphi_h)$. 

Let us define $\mathcal S:=\{u\in W^{1,q}_0(\Omega)\,:\,\|u\|_q=1\}$ and
$$
\eta_1:=\inf_{u\in \mathcal S}\Phi(u).
$$
Denoted by $\lambda_1$ the first eigenvalue of the Dirichlet $q$-Laplacian, we claim that $\eta_1\ge \lambda_1>0$ is achieved by a function $\varphi_1\in \mathcal S$. 

\begin{proof}[$\bullet$ Proof of the claim.]
Let $(u_n)\subset\mathcal S$ be such that $\Phi(u_n)\to \eta_1$, then for every $n\in\N$
$$
\|\nabla u_n\|_q^q\le \alpha\|\nabla u_n\|_p^p+\|\nabla u_n\|_q^q=\eta_1+o(1),
$$
where $o(1)\to0$ as $n\to\infty$. Then, $(u_n)$ is bounded in the reflexive Banach space $W^{1,q}_0(\Omega)$ and so, up to a subsequence, $u_n\rightharpoonup u$ in $W^{1,q}_0(\Omega)$. 
Now, the function $\Phi$ is convex and continuous w.r.t. the strong topology in $W^{1,q}_0(\Omega)$, then it is weakly lower semicontinuous in $W^{1,q}_0(\Omega)$. Hence,  
\begin{equation}\label{eq:le}
\Phi(u)\le \liminf_{n\to\infty}\Phi(u_n)=\eta_1.
\end{equation} 
On the other hand, by the compact embedding $W^{1,q}_0(\Omega)\hookrightarrow\hookrightarrow L^q(\Omega)$, $u_n\to u$ in $L^q(\Omega)$, and so $u\in\mathcal S$. Therefore, $u$ is an admissible competitor for the infimum defining $\eta_1$, so that the only possibility for \eqref{eq:le} to hold is that $\Phi(u)=\eta_1$. 
\end{proof}

We will denote by $\varphi_1$ a function in $\mathcal{S}$ where $\eta_1$ is attained. Hence, summarizing, $\|\varphi_1\|_q=1$, $\alpha\|\nabla \varphi_1\|_p^p+\|\nabla \varphi_1\|_q^q=\eta_1$, and 
\begin{equation}\label{eq:dis-eta1}
\eta_1 \le \alpha\int_\Omega\left|\nabla \left(\frac{u}{\|u\|_q}\right)\right|^pdx+\int_\Omega\left|\nabla\left(\frac{ u}{\|u\|_q}\right)\right|^qdx\quad	\mbox{for every }u\in W^{1,q}_0(\Omega)\setminus\{0\}.
\end{equation}

\begin{remark}\label{rmk:remark2}
Being $p<q$, \eqref{eq:dis-eta1} implies
$$
\begin{gathered}
\eta_1 \|u\|_q^q\le \alpha\int_\Omega |\nabla u |^pdx+\int_\Omega |\nabla u|^qdx\quad	\mbox{for every }u\in W^{1,q}_0(\Omega)\cap\mathcal B,\\
\eta_1 \|u\|_q^p\le \alpha\int_\Omega |\nabla u |^pdx+\int_\Omega |\nabla u|^qdx\quad	\mbox{for every }u\in W^{1,q}_0(\Omega)\setminus\mathcal B,
\end{gathered}
$$
where $\mathcal B:=\{u\in W^{1,q}_0(\Omega)\,:\,\|u\|_q\le 1\}$. In particular, if $\alpha=0$ or equivalently $p=q$, 
$$
\eta_1 \|u\|_q^q\le (\alpha+1)\int_\Omega |\nabla u|^qdx\quad	\mbox{for every }u\in W^{1,q}_0(\Omega).
$$
We further remark that, if $\alpha>0$ and $q\le p^*$, $\eta_1(=\eta_1^{(\alpha)})>\lambda_1(=\eta_1^{(0)})$. Indeed, by the Sobolev embedding $W^{1,p}_0(\Omega)\hookrightarrow L^q(\Omega)$ ($\|u\|_q\le C_S\|\nabla u\|_p$, for some $C_S>0$, for all $u\in W^{1,p}_0(\Omega)$) and the variational characterization of $\lambda_1$, for every $u\in \mathcal S$
$$
\alpha\|\nabla u\|_p^p+\|\nabla u\|_q^q \ge \alpha C_S^{-p}\|u\|_q^p+ \|\nabla u\|_q^q\ge  \alpha C_S^{-p}+\lambda_1>\lambda_1.
$$
\end{remark}
\smallskip

Related to $\varphi_1$, we can introduce the linear operator $\mathcal L_1:L^{q}(\Omega)\to\mathbb R$ defined as
$$
\mathcal L_1(u):=\int_\Omega|\varphi_1|^{q-2}\varphi_1 u\, dx\quad\mbox{for every }u\in L^{q}(\Omega).
$$
We note that $\mathcal L_1(\varphi_1)=1$, $\mathcal L_1\in L^{q'}(\Omega)$ (in particular, by H\"older's inequality, $\|\mathcal L_1\|_{p'}=1$) and $\mathcal L_1\big|_{W^{1,q}_0(\Omega)}\in W^{-1,q'}(\Omega)$ (in particular, by H\"older's and Poincar\'e's inequalities, $\|\mathcal L_1\|_{-1,q'}\le \lambda_1^{-1/q}$). 

We introduce the new constraint 
$$
\mathcal S_1:=\{u\in\mathcal S\,:\,\mathcal L_1 u=0\}=\mathrm{ker}(\mathcal L_1\big|_S)
$$
and the corresponding constrained infimum
\begin{equation}\label{eq:eta2}
\eta_2:=\inf_{u\in\mathcal S_1}\Phi(u).
\end{equation}
Since $\mathcal S_1\subset \mathcal S$, we have $\eta_1\le\eta_2$. Next, we claim that also the infimum in \eqref{eq:eta2} is attained. 

\begin{proof}[$\bullet$ Proof of the claim.]
The proof is the same as for the previous claim, with the only difference that one has to prove also that $\mathcal L_1(u)=0$, $u$ being the weak limit of the minimizing sequence $(u_n)\subset\mathcal S_1$. This is a consequence of $\mathcal L_1\big|_{W^{1,q}_0(\Omega)}\in W^{-1,q'}(\Omega)$, being $\mathcal L_1(u_n)=0$ for every $n\in\N$. 
\end{proof}

By iterating this procedure, we introduce a sequence of positive numbers $(\eta_h)$, 
a sequence of functions $(\varphi_h)\subset\mathcal S$ and, in correspondence, a sequence of linear operators $(\mathcal L_h)\subset L^{q'}(\Omega)\cap W^{-1,q'}(\Omega)$ defined by
$$
\mathcal L_h u:=\int_\Omega |\varphi_h|^{q-2}\varphi_h u\, dx\quad\mbox{for every }u\in L^{q}(\Omega) \mbox{ and } h\in\mathbb N.
$$
More precisely, denoted $\mathcal S_0:=\mathcal S$, we define the following weakly closed subspaces of $W^{1,q}_0(\Omega)$
$$
\mathcal S_h:=\{u\in\mathcal S\,:\,\mathcal L_1 u=\dots=\mathcal L_h u=0\}=\bigcap_{i=1}^h\mathrm{ker}\big(\mathcal L_i\big|_\mathcal S\big),
$$
and, for every $h\in\N$, the corresponding constrained infimum
\begin{equation}\label{eq:etah}
\eta_h:=\inf_{u\in\mathcal S_{h-1}}\Phi(u),
\end{equation}
each one achieved on the corresponding function $\varphi_h\in\mathcal S_{h-1}$.
From the definition, it  easily follows that  
$$
0<\eta_1\le\dots\le\eta_h\le\eta_{h+1}\le\dots,
$$ 
and 
\begin{equation}\label{eq:properties}
\mathcal L_h\varphi_h=1 \mbox{ for every }h\in\N\quad\mbox{ and }\quad\mathcal L_k\varphi_h=0 \mbox{ for }k=1,\dots,h-1.
\end{equation} 
In particular, $\mathrm{span}\{\varphi_h\}\cap \mathrm{span}\{\varphi_k\}=\{0\}$ if $h\neq k$. Since otherwise, for $k<h$ one could have for a suitable constant $\beta$, $1=\mathcal L_k\varphi_k=\mathcal L_k(\beta\varphi_h)=\beta\mathcal L_k(\varphi_h)=0$, which is absurd.

In the spirit of \cite[Lemma 5.2]{CP}, we prove the following proposition.
\begin{proposition}\label{prop:etahdiv} The sequence $(\eta_h)$ diverges positively.
\end{proposition}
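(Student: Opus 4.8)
The plan is to argue by contradiction: suppose $\eta_h \to \eta_\infty < \infty$ as $h\to\infty$ (the sequence is monotone nondecreasing and positive, so it certainly converges to some $\eta_\infty \in [\lambda_1,\infty]$; we must rule out $\eta_\infty$ finite). Since each $\varphi_h\in\mathcal S_{h-1}$ achieves $\eta_h$, we have $\|\varphi_h\|_q = 1$ and $\Phi(\varphi_h) = \alpha\|\nabla\varphi_h\|_p^p + \|\nabla\varphi_h\|_q^q = \eta_h \le \eta_\infty$. In particular $\|\nabla\varphi_h\|_q^q \le \eta_\infty$ for all $h$, so $(\varphi_h)$ is bounded in the reflexive space $W^{1,q}_0(\Omega)$. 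Passing to a subsequence, $\varphi_h \rightharpoonup \varphi$ weakly in $W^{1,q}_0(\Omega)$ and, by the compact embedding $W^{1,q}_0(\Omega)\hookrightarrow\hookrightarrow L^q(\Omega)$, strongly in $L^q(\Omega)$.

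The crux is then to show that the $\varphi_h$ are "almost orthogonal" in the sense that they cannot converge weakly to a nonzero limit, which will contradict $\|\varphi_h\|_q=1$ together with the $L^q$-strong convergence. Concretely, I would use the orthogonality relations \eqref{eq:properties}: for $k<h$, $\mathcal L_k\varphi_h = \int_\Omega|\varphi_k|^{q-2}\varphi_k\,\varphi_h\,dx = 0$. Fix $k$ and let $h\to\infty$ along the subsequence: since $|\varphi_k|^{q-2}\varphi_k\in L^{q'}(\Omega)$ is a fixed element and $\varphi_h\to\varphi$ strongly in $L^q(\Omega)$, we get $\mathcal L_k\varphi = \int_\Omega |\varphi_k|^{q-2}\varphi_k\,\varphi\,dx = 0$ for every $k\in\N$. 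But also, using $\mathcal L_k\varphi_k = 1$ and $\varphi_k\to\varphi$ in $L^q(\Omega)$ (the whole subsequence of indices $\ge$ some point), a diagonal-type argument shows $\mathcal L_k\varphi_k \to \int_\Omega|\varphi|^{q-2}\varphi\cdot\varphi\,dx = \|\varphi\|_q^q$ — more carefully, one writes $\mathcal L_k\varphi_k - \|\varphi\|_q^q = \int_\Omega(|\varphi_k|^{q-2}\varphi_k - |\varphi|^{q-2}\varphi)\varphi_k\,dx + \int_\Omega|\varphi|^{q-2}\varphi(\varphi_k-\varphi)\,dx$, and both terms go to $0$: the second by strong $L^q$ convergence of $\varphi_k$ and $|\varphi|^{q-2}\varphi\in L^{q'}$, the first because $|\varphi_k|^{q-2}\varphi_k\to|\varphi|^{q-2}\varphi$ in $L^{q'}(\Omega)$ (continuity of the map $v\mapsto|v|^{q-2}v$ from $L^q$ to $L^{q'}$, which follows from strong $L^q$ convergence) paired against the bounded sequence $(\varphi_k)$ in $L^q$. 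Hence $1 = \lim_k \mathcal L_k\varphi_k = \|\varphi\|_q^q$, so $\varphi\ne 0$; yet we showed $\mathcal L_k\varphi = 0$ for all $k$, and taking $k$ such that $\varphi_k$ is close to $\varphi$ — or rather: since $\mathcal L_k\varphi\to\|\varphi\|_q^q = 1$ by the same computation (replacing $\varphi_k$ by $\varphi$ in one slot), we would need $1 = 0$, a contradiction.

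The main obstacle I anticipate is making the last step fully rigorous: one needs both $\mathcal L_k\varphi \to \|\varphi\|_q^q$ and $\mathcal L_k\varphi = 0$ for all $k$, and these are in genuine tension only if $\|\varphi\|_q^q \ne 0$, so the real content is establishing $\|\varphi\|_q = 1$ (equivalently, that no mass escapes), which rests on the strong $L^q$-convergence from compact embedding — this is where finiteness of $\eta_\infty$ is used, since it gives the uniform $W^{1,q}_0$ bound. A secondary technical point is the continuity of $v\mapsto |v|^{q-2}v$ as a map $L^q(\Omega)\to L^{q'}(\Omega)$ along strongly convergent sequences; this is standard (it follows from pointwise a.e. convergence along a subsequence plus a dominated-convergence / equi-integrability argument, or from Nemytskii operator continuity), but should be invoked cleanly. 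Once the contradiction $1 = \|\varphi\|_q^q = 0$ is reached, we conclude $\eta_\infty = +\infty$, i.e. $(\eta_h)$ diverges, as claimed.
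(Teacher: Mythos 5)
Your proposal is correct and follows the same skeleton as the paper's proof: argue by contradiction, deduce from the uniform bound $\Phi(\varphi_h)=\eta_h\le\bar\eta$ that $(\varphi_h)$ is bounded in $W^{1,q}_0(\Omega)$, extract a subsequence converging weakly in $W^{1,q}_0(\Omega)$ and strongly in $L^q(\Omega)$, and then exploit the relations \eqref{eq:properties} to reach a contradiction. The only genuine difference is in how the contradiction is extracted. You pass to the limit function $\varphi$, prove $\mathcal L_k\varphi=0$ for every $k$ while also $\mathcal L_k\varphi\to\|\varphi\|_q^q=1$; this second limit rests on the continuity of the Nemytskii map $v\mapsto|v|^{q-2}v$ from $L^q(\Omega)$ to $L^{q'}(\Omega)$, which you correctly flag as the technical point to be justified. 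The paper never identifies the limit: it only uses that the subsequence is Cauchy in $L^q(\Omega)$ and estimates, via H\"older's inequality and $\|\varphi_h\|_q=1$,
$$
1=\mathcal L_h\varphi_h-\mathcal L_h\varphi_{h+k}=|\mathcal L_h(\varphi_{h+k}-\varphi_h)|\le\|\varphi_h\|_q^{q-1}\,\|\varphi_{h+k}-\varphi_h\|_q<\varepsilon,
$$
which is more economical, avoiding the Nemytskii continuity altogether (and also avoiding your roundabout derivation of $\|\varphi\|_q=1$, which in any case follows directly from strong $L^q$ convergence of unit vectors). Both arguments are valid; yours costs a little more machinery but yields the extra information that the weak limit is a nonzero function annihilated by every $\mathcal L_k$, which the paper does not need.
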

\begin{proof}  Suppose by contradiction that there exists a number $\bar\eta\in(0,\infty)$ such that  
$$
\eta_h\le\bar\eta\quad\mbox{for every }h\in\mathbb N.
$$
As a consequence, for every $h\in\N$
$$
\|\nabla \varphi_h\|_q^q\le \Phi(\varphi_h)=\eta_h\le\bar\eta,
$$
that is $(\varphi_h)$ is bounded in the reflexive Banach space $W^{1,q}_0(\Omega)$. Thus, there exist a subsequence, still denoted by $(\varphi_h)$, and a function $\bar\varphi\in W^{1,q}_0(\Omega)$ such that $\varphi_h\rightharpoonup\bar\varphi$ $\in W^{1,q}_0(\Omega)$ and $\varphi_h\to\bar\varphi$ in $L^{q}(\Omega)$. In particular, $(\varphi_h)$ is a Cauchy sequence in $L^{q}(\Omega)$, thus for any positive $\varepsilon<1$, there is  $h_0\in\N$ such that 
$$
\|\varphi_{h+k}-\varphi_h\|_{q}<\varepsilon\quad\mbox{for every }h\ge h_0,\,k\ge1.
$$
Therefore, \eqref{eq:properties} and the H\"older inequality imply 
$$
\begin{aligned}1&=\mathcal L_h\varphi_h=\mathcal L_h\varphi_h-\mathcal L_h\varphi_{h+k}=|\mathcal L_h(\varphi_{h+k}-\varphi_h)|\le\int_{\Omega}|\varphi_h|^{q-1}|\varphi_{h+k}-\varphi_h| dx\\
&\le \|\varphi_h\|_q\|\varphi_{h+k}-\varphi_h\|_q<\varepsilon,
\end{aligned}
$$
that is a contradiction.
\end{proof}

Let us recall that if $V\subseteq X$ is a closed subspace of a Banach space $X$, a subspace
$W\subset X$ is a (topological) complement of $V$ if $W$ is closed and every $x\in X$ can be uniquely written as $v+w$, with $v\in V$ and $w\in W$; furthermore the projection operators onto $V$ and $W$ are (linear and) continuous. When this happens and $V$ has finite dimension, we say that $W$ has finite codimension, with $\mathrm{codim} W=\mathrm{dim} V$.

\begin{lemma}\label{lem:primadec} 
Let $\alpha\ge 0$ be fixed. For every $h\in\mathbb N$, let us set
\begin{align}
\label{eq:Vh-def}&V_h=V_h^{(\alpha)} :=\mathrm{span}\{\varphi_1,\dots,\varphi_h\},\\
\label{eq:Wh-def}&W_h=W_h^{(\alpha)}:=\bigcap_{i=1}^h\mathrm{ker}\big(\mathcal L_i\big)=\{u\in W^{1,q}_0(\Omega)\,:\,\mathcal L_1 u=\dots=\mathcal L_h u=0\}.
\end{align}
Then 
$$
W^{1,q}_0(\Omega)=V_h\oplus W_h\quad\hbox{ for every } h\in\mathbb N. 
$$
\end{lemma}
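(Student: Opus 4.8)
The plan is to reduce the statement to three routine verifications: that $V_h$ and $W_h$ are closed subspaces, that $V_h\cap W_h=\{0\}$ and $V_h+W_h=W^{1,q}_0(\Omega)$, and that the associated projections are continuous. Closedness of $V_h$ is immediate, since $V_h$ is finite dimensional; in fact $\dim V_h=h$ because the $\varphi_1,\dots,\varphi_h$ are linearly independent. The key structural fact, to be extracted from \eqref{eq:properties}, is that the $h\times h$ matrix $M:=(\mathcal L_k\varphi_j)_{k,j=1}^h$ is lower triangular with $1$'s on the diagonal: indeed $\mathcal L_k\varphi_j=0$ whenever $k<j$ (since $\varphi_j\in\mathcal S_{j-1}$) and $\mathcal L_k\varphi_k=\|\varphi_k\|_q^q=1$. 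Hence $\det M=1$ and $M$ is invertible. Closedness of $W_h$ follows because $W_h=\bigcap_{i=1}^h\ker\big(\mathcal L_i|_{W^{1,q}_0(\Omega)}\big)$ is a finite intersection of kernels of the continuous functionals $\mathcal L_i|_{W^{1,q}_0(\Omega)}\in W^{-1,q'}(\Omega)$.

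For the trivial intersection, I would take $u=\sum_{j=1}^h c_j\varphi_j\in V_h\cap W_h$ and apply $\mathcal L_k$ for $k=1,\dots,h$; using $u\in W_h$ this yields the homogeneous system $Mc=0$ with $c=(c_1,\dots,c_h)^{\mathrm T}$, so $c=0$ and $u=0$ (equivalently, apply $\mathcal L_1,\mathcal L_2,\dots$ in this order to peel off $c_1$, then $c_2$, and so on). For the sum, given an arbitrary $u\in W^{1,q}_0(\Omega)$, set $b:=(\mathcal L_1 u,\dots,\mathcal L_h u)^{\mathrm T}$, let $c:=M^{-1}b$, and put $v:=\sum_{j=1}^h c_j\varphi_j\in V_h$; then $\mathcal L_k(u-v)=b_k-\sum_j c_j\mathcal L_k\varphi_j=0$ for every $k=1,\dots,h$, so $w:=u-v\in W_h$ and $u=v+w$. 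The linear independence of the $\varphi_j$ needed above is the special case $u=0$ of the intersection argument.

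This establishes the algebraic direct sum $W^{1,q}_0(\Omega)=V_h\oplus W_h$, and the decomposition is unique precisely because $V_h\cap W_h=\{0\}$. For the topological part, the projection onto $V_h$ is the explicit map $u\mapsto\sum_{j=1}^h\big(M^{-1}b(u)\big)_j\,\varphi_j$ with $b(u)=(\mathcal L_1u,\dots,\mathcal L_hu)$, which is linear and continuous since each $\mathcal L_k$ is continuous on $W^{1,q}_0(\Omega)$; hence the complementary projection onto $W_h$ is continuous as well. (Alternatively, since $V_h$ and $W_h$ are both closed and algebraically complementary, continuity of the projections is automatic by the closed graph theorem.) I do not expect a genuine obstacle here: the only point deserving care is that \eqref{eq:properties} does not give full biorthogonality between $(\mathcal L_k)$ and $(\varphi_j)$ but only the triangular relations $\mathcal L_k\varphi_j=0$ for $k<j$ and $\mathcal L_k\varphi_k=1$, so one must work with the invertibility of the triangular matrix $M$ rather than with a diagonal/orthogonal structure; everything else is the standard finite-codimension complement argument, and the continuity of the $\mathcal L_i$ on $W^{1,q}_0(\Omega)$ is exactly what makes $W_h$ closed and the projections bounded.
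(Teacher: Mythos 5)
Your proof is correct and follows essentially the same route as the paper's: both exploit the triangular structure from \eqref{eq:properties} ($\mathcal L_k\varphi_j=0$ for $k<j$, $\mathcal L_k\varphi_k=1$) to solve the resulting triangular linear system -- the paper by explicit forward substitution for the coefficients $c_i$, you by inverting the unitriangular matrix $M$ -- obtaining $V_h\cap W_h=\{0\}$ and $V_h+W_h=W^{1,q}_0(\Omega)$ in the same way. Your added verifications of the closedness of $W_h$ and the continuity of the projections merely make explicit what the paper leaves implicit.
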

\begin{proof} Reasoning as in \cite[Lemma 5.3]{CP}, we point out that for any $u\in V_h$, by the linear independence of $\varphi_1, \dots, \varphi_h$, we can write uniquely 
$$
u=\sum_{i=1}^h c_i\varphi_i,
$$ 
with $(c_1,\dots,c_h)\in\mathbb R^h$ and by \eqref{eq:properties} it results that
$$
\mathcal L_1 u=c_1,\quad\mathcal L_i u=\sum_{j=1}^{i-1}c_j\mathcal L_i\varphi_j+c_i \;\mbox{ for every }i\in\{2,\dots,h\}.
$$
Therefore, given a function $u=\sum_{i=1}^h c_i\varphi_i$ in $V_h$, the following equivalences hold
$$
\begin{aligned}
u\in W_h \quad&\Leftrightarrow\quad \mathcal L_i u=0 \,\mbox{ for every }i\in\{1,\dots, h\}\\
&\Leftrightarrow\quad c_1=\dots=c_h=0\quad\Leftrightarrow\quad u=0.
\end{aligned}
$$
Which means that $V_h\cap W_h=\{0\}$. 
Now, fixed any $u\in W^{1,q}_0(\Omega)$, we put 
$$
c_1:=\mathcal L_1 u,\quad c_i:=\mathcal L_i u-\sum_{j=1}^{i-1}c_j\mathcal L_i\varphi_j \;\mbox{ for every }i\in\{2,\dots,h\}
$$
and 
$$
v:=\sum_{i=1}^h c_i\varphi_i\in V_h.
$$ 
Taken $w:=u-v$, we have that
$$
\mathcal L_1 w=\mathcal L_1(u-v)=\mathcal L_1 u-c_1=0
$$
and for every $i\in\{2,\dots,h\}$
$$
\mathcal L_i w=\mathcal L_i(u-v)=\mathcal L_iu-\sum_{j=1}^{i-1}c_j\mathcal L_i\varphi_j-c_i-\sum_{j=i+1}^h c_j\mathcal L_i\varphi_j=0.
$$
Hence, $w\in W_h$ and we conclude the proof. 
\end{proof}

We remark that, by definition, for each $h\in\mathbb N$, $W_h$ is a closed subspace of $W^{1,q}_0(\Omega)$ of codimension $h$. Moreover, reasoning as in Remark \ref{rmk:remark2}, the following inequalities hold in $W_{h-1}$:
\begin{equation}\label{eq:disug-Wh}
\begin{gathered}
\eta_{h} \|u\|_q^q\le \alpha\int_\Omega |\nabla u |^pdx+\int_\Omega |\nabla u|^qdx\quad	\mbox{for every }u\in W_{h-1}\cap\mathcal B,\\
\eta_{h} \|u\|_q^p\le \alpha\int_\Omega |\nabla u |^pdx+\int_\Omega |\nabla u|^qdx\quad	\mbox{for every }u\in W_{h-1}\setminus\mathcal B,
\end{gathered}
\end{equation}
with $\mathcal B=\{u\in W^{1,q}_0(\Omega)\,:\,\|u\|_q\le 1\}$, and in particular, if $\alpha=0$ or $p=q$, 
\begin{equation}\label{eq:disug-Wh-alpha0}
\eta_{h} \|u\|_q^q\le (\alpha+1)\int_\Omega |\nabla u |^qdx\quad	\mbox{for every }u\in W_{h-1}.
\end{equation}

The sequence of quasi-eigenvalues $(\eta_h)$ satisfies \eqref{eq:disug-Wh}. In order to prove multiplicity results, it is useful to have also a reversed inequality on finite dimensional subspaces of $W^{1,q}_0(\Omega)$. Hence, we introduce $(\nu_h)$, another sequence of quasi-eigenvalues satisfying exactly this property.

For all $h\in\mathbb N$ we set
$$
\mathbb W_h:=\{V\,:\,V\mbox{ subspace of }W^{1,q}_0(\Omega),\,\varphi_1\in V,\,\mathrm{dim}V\ge h\}
$$
and 
\begin{equation}\label{eq:nuh-def}
\nu_h:=\inf_{V\in \mathbb W_h}\sup_{u\in V\setminus\{0\}}\frac{\alpha\|\nabla u\|_p^p+(1-\alpha)\|\nabla u\|_q^q}{\alpha\|u\|_p^p+(1-\alpha)\|u\|_q^q},
\end{equation}
with $\varphi_1$ defined above.
Since $\mathbb W_{h+1}\subset \mathbb W_h$, $\nu_{h}\le\nu_{h+1}$ for every $h$.

\section{Main results}\label{sec:sec3}
We first observe that, by $(f)$ and $(f_\infty)$, for every $\varepsilon>0$  there exists $R_\varepsilon>0$ such that 
\begin{equation}\label{eq:conseq-f-infty}
|f(x,t)|\le \varepsilon|t|^{q-1}+A_\varepsilon\quad\mbox{for a.e. }x\in\Omega \mbox{ and every }t\in\R,
\end{equation}
where $A_\varepsilon:=\left\|\max_{|t|\le R_\varepsilon}|f(\cdot,t)|\right\|_\infty\in (0,\infty)$.

Throughout this section, we will denote by the same symbol $C$ various positive constants whose values are not important for the proof itself and may change from line to line. 

\subsection{Compactness condition}
We introduce here the following operator $A\,:\,W^{1,q}_0(\Omega)\to W^{-1,q'}(\Omega)$ defined as
$$
\langle A u,v\rangle:=\int_\Omega (|\nabla u|^{p-2}+|\nabla u|^{q-2})\nabla u\nabla vdx\quad \mbox{for every }u,\,v\in W^{1,q}_0(\Omega).
$$
 
\begin{lemma}\label{lem:S+}
The operator $-\Delta_p-\Delta_q$ satisfies the $(S_+)$-property, i.e., if $u_n\rightharpoonup u$ in $W^{1,q}_0(\Omega)$ and $\langle Au_n,u_n-u\rangle \to 0$, then $u_n\to u$ in $W^{1,q}_0(\Omega)$. 
\end{lemma}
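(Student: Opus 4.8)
The plan is to exploit the uniform convexity / strict monotonicity of both the $p$-Laplacian and the $q$-Laplacian, combined with the compact embedding $W^{1,q}_0(\Omega) \hookrightarrow\hookrightarrow L^q(\Omega)$ and weak convergence. First I would observe that since $u_n \rightharpoonup u$ in $W^{1,q}_0(\Omega)$, we have $A u \in W^{-1,q'}(\Omega)$ and $\langle Au, u_n - u\rangle \to 0$ by definition of weak convergence; subtracting this from the hypothesis $\langle Au_n, u_n-u\rangle \to 0$ gives
$$
\langle Au_n - Au,\, u_n - u\rangle \to 0.
$$
Writing this out, $\langle Au_n - Au, u_n-u\rangle$ is a sum of two nonnegative terms,
$$
\int_\Omega \left(|\nabla u_n|^{p-2}\nabla u_n - |\nabla u|^{p-2}\nabla u\right)\cdot(\nabla u_n - \nabla u)\,dx \;+\; \int_\Omega \left(|\nabla u_n|^{q-2}\nabla u_n - |\nabla u|^{q-2}\nabla u\right)\cdot(\nabla u_n - \nabla u)\,dx,
$$
each nonnegative by the elementary monotonicity inequality for the vector field $\xi \mapsto |\xi|^{r-2}\xi$ (valid for any $r>1$). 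Hence \emph{both} integrals tend to $0$ separately.

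The second step is to upgrade the vanishing of the $q$-term to strong convergence of the gradients in $L^q$. For $q \ge 2$ one has the clean inequality $(|\xi|^{q-2}\xi - |\zeta|^{q-2}\zeta)\cdot(\xi-\zeta) \ge c_q |\xi - \zeta|^q$, which immediately forces $\nabla u_n \to \nabla u$ in $L^q(\Omega;\R^N)$, i.e. $u_n \to u$ in $W^{1,q}_0(\Omega)$. For the singular range $1 < q < 2$ the appropriate inequality is
$$
|\xi - \zeta|^q \le C_q \left[(|\xi|^{q-2}\xi - |\zeta|^{q-2}\zeta)\cdot(\xi-\zeta)\right]^{q/2}\left(|\xi|^q + |\zeta|^q\right)^{1-q/2},
$$
so that by H\"older's inequality with exponents $2/q$ and $2/(2-q)$,
$$
\int_\Omega |\nabla u_n - \nabla u|^q \,dx \le C_q \left(\int_\Omega (|\nabla u_n|^{q-2}\nabla u_n - |\nabla u|^{q-2}\nabla u)\cdot(\nabla u_n-\nabla u)\,dx\right)^{q/2}\left(\|\nabla u_n\|_q^q + \|\nabla u\|_q^q\right)^{1-q/2}.
$$
Since $(u_n)$ is weakly convergent it is bounded in $W^{1,q}_0(\Omega)$, so the last factor stays bounded, while the first factor $\to 0$ by Step 1; therefore $\nabla u_n \to \nabla u$ in $L^q$ and we are done. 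Note the $p$-term plays essentially no role here beyond being nonnegative — it is the $q$-term that controls the $W^{1,q}_0$ norm, which is the natural one in this setting.

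I expect the only genuine subtlety to be the case $1 < q < 2$, where the monotonicity inequality degenerates and one must invoke the H\"older-type estimate above rather than a direct coercivity bound; handling $p$ and $q$ with possibly one or both below $2$ means one should state the two auxiliary inequalities for a general exponent $r>1$ once and apply them to $r=p$ and $r=q$. One small point to be careful about: to subtract the two limits in Step 1 we need $Au \in W^{-1,q'}(\Omega)$, which holds because $u \in W^{1,q}_0(\Omega)$ and $W^{1,q}_0(\Omega) \hookrightarrow W^{1,p}_0(\Omega)$ (as $p<q$ and $\Omega$ is bounded), so that $|\nabla u|^{p-2}\nabla u \in L^{p'}(\Omega) \hookrightarrow L^{q'}(\Omega)$ and $|\nabla u|^{q-2}\nabla u \in L^{q'}(\Omega)$; everything is then a legitimate duality pairing. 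No compactness of the embedding is actually needed for this lemma — weak convergence in $W^{1,q}_0$ alone suffices — though it is of course what makes the hypothesis $\langle Au_n, u_n-u\rangle\to 0$ verifiable in the applications.
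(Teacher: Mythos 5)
Your proof is correct, but it follows a different route from the paper's. You work pointwise: you split $\langle Au_n-Au,u_n-u\rangle$ into the $p$-part and the $q$-part, use the monotonicity of the vector field $\xi\mapsto|\xi|^{r-2}\xi$ to see that each part is nonnegative and hence tends to $0$ separately, and then invoke the Simon-type inequalities (strong monotonicity for $q\ge2$, the H\"older-compensated version for $1<q<2$) to convert the vanishing of the $q$-part directly into $\nabla u_n\to\nabla u$ in $L^q$. The paper instead never touches the gradients pointwise: it uses only the \emph{scalar} monotonicity of $t\mapsto t^{r-1}$ applied to the norms $\|\nabla u_n\|_p,\|\nabla u_n\|_q$, bounds the resulting nonnegative combination of norms from above by $\langle Au_n,u_n-u\rangle-\langle Au,u_n-u\rangle=o(1)$ via H\"older, deduces $\|\nabla u_n\|_q\to\|\nabla u\|_q$, and concludes by the Radon--Riesz property (weak convergence plus convergence of norms in the uniformly convex space $W^{1,q}_0(\Omega)$). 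The paper's argument is softer and avoids the case split at $q=2$ and the pointwise elementary inequalities; yours is more quantitative, yields strong convergence of the gradients with an explicit estimate, and in fact shows that each of $-\Delta_p$ and $-\Delta_q$ separately satisfies $(S_+)$ along the sequence. Your closing observations --- that $Au\in W^{-1,q'}(\Omega)$ because $L^{p'}(\Omega)\hookrightarrow L^{q'}(\Omega)$ on the bounded $\Omega$, and that no compact embedding is needed for this lemma --- are both accurate and consistent with the paper.
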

\begin{proof} This result is contained in \cite[Proposition 2.2]{PS}, but for the sake of clarity, we prefer to report here its proof. Since $\|\nabla u_n\|_p^{p-1}-\|\nabla u\|_p^{p-1}$ and $\|\nabla u_n\|_p-\|\nabla u\|_p$ have the same sign, and the same holds true for the $q$-norms, 
\begin{equation}\label{eq:liminf}
\begin{aligned}
(\|\nabla u_n\|_p^{p-1}-\|\nabla u\|_p^{p-1})&(\|\nabla u_n\|_p-\|\nabla u\|_p)\\
&+(\|\nabla u_n\|_q^{q-1}-\|\nabla u\|_q^{q-1})(\|\nabla u_n\|_q-\|\nabla u\|_q)\ge 0.
\end{aligned}
\end{equation}
On the other hand, for every $u,\,v\in W^{1,q}_0(\Omega)$, by H\"older's inequality 
$$
\langle A u,v\rangle\le \|\nabla u\|_p^{p-1}\|\nabla v\|_p+\|\nabla u\|_q^{q-1}\|\nabla v\|_q.
$$
Therefore, using the previous inequality,
\begin{multline*}
(\|\nabla u_n\|_p^{p-1}-\|\nabla u\|_p^{p-1})(\|\nabla u_n\|_p-\|\nabla u\|_p)
+(\|\nabla u_n\|_q^{q-1}-\|\nabla u\|_q^{q-1})(\|\nabla u_n\|_q-\|\nabla u\|_q)\\
=(\|\nabla u_n\|_p^p+\|\nabla u_n\|_q^q)+(\|\nabla u\|_p^p+\|\nabla u\|_q^q)
-(\|\nabla u_n\|_p^{p-1}\|\nabla u\|_p+\|\nabla u_n\|_q^{q-1}\|\nabla u\|_q)\\
-(\|\nabla u\|_p^{p-1}\|\nabla u_n\|_p+\|\nabla u\|_q^{q-1}\|\nabla u_n\|_q)\\
\le \langle Au_n,u_n\rangle +\langle Au,u\rangle- \langle Au_n,u\rangle-\langle Au,u_n\rangle\\
=\langle Au_n,u_n-u\rangle-\langle Au,u_n-u\rangle=o(1),
\end{multline*}
where $\langle Au_n,u_n-u\rangle=o(1)$ by assumption, and $\langle Au,u_n-u\rangle=o(1)$ because $u_n\rightharpoonup u_n$ both in $W^{1,q}_0(\Omega)$ and in $W^{1,p}_0(\Omega)$ (since $p<q$, $W^{-1,p'}(\Omega)\subset W^{-1,q'}(\Omega)$ and so $u_n \rightharpoonup u$ also in $W^{1,p}_0(\Omega)$). 
Definitely, taking into account also \eqref{eq:liminf}, 
$$
\begin{aligned}(\|\nabla u_n\|_p^{p-1}-\|\nabla u\|_p^{p-1})&(\|\nabla u_n\|_p-\|\nabla u\|_p)\\
&+(\|\nabla u_n\|_q^{q-1}-\|\nabla u\|_q^{q-1})(\|\nabla u_n\|_q-\|\nabla u\|_q)\to 0,
\end{aligned}
$$ 
which implies in particular that $\|\nabla u_n\|_q\to \|\nabla u\|_q$.   
 In conclusion, the convergence of the norms and the weak convergence yield the desired strong convergence $u_n\to u$ in the uniformly convex Banach space $W^{1,q}_0(\Omega)$.
\end{proof}

\begin{lemma}\label{lem:CPS}
Under assumptions $(f)$, $(f_\infty)$, and $(f_\mathrm{cpt})$, the functional $I$ satisfies the (CPS)-condition. 
\end{lemma}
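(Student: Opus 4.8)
The plan is the standard three-step scheme for functionals that are not coercive. Let $(u_n)\subset W^{1,q}_0(\Omega)$ satisfy $I(u_n)\to c$ and $\|I'(u_n)\|_{-1,q'}(1+\|u_n\|)\to 0$ (so in particular $\|I'(u_n)\|_{-1,q'}\to 0$). First I would show that $(u_n)$ is bounded in $W^{1,q}_0(\Omega)$ --- the only delicate point, since $(f_\infty)$ destroys the Ambrosetti--Rabinowitz condition. Granting boundedness, up to a subsequence $u_n\rightharpoonup u$ in $W^{1,q}_0(\Omega)$ and, by the compact embedding $W^{1,q}_0(\Omega)\hookrightarrow\hookrightarrow L^q(\Omega)$, $u_n\to u$ in $L^q(\Omega)$. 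Then I would test $I'(u_n)$ against $u_n-u$: in
$$
\langle I'(u_n),u_n-u\rangle=\langle Au_n,u_n-u\rangle-\ell_\infty\int_\Omega|u_n|^{q-2}u_n(u_n-u)\,dx-\int_\Omega f(x,u_n)(u_n-u)\,dx
$$
the left-hand side is $o(1)$ (as $\|I'(u_n)\|_{-1,q'}\to 0$ and $\|u_n-u\|$ is bounded), and the last two integrals are $o(1)$ because $|u_n|^{q-2}u_n$ and, by \eqref{eq:conseq-f-infty}, $f(\cdot,u_n)$ are bounded in $L^{q'}(\Omega)$ while $u_n-u\to 0$ in $L^q(\Omega)$; hence $\langle Au_n,u_n-u\rangle\to 0$ and Lemma~\ref{lem:S+} gives $u_n\to u$ in $W^{1,q}_0(\Omega)$, which is the claim.

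To prove the boundedness, I would argue by contradiction: suppose $\|u_n\|\to\infty$ and set $v_n:=u_n/\|u_n\|$, so $\|v_n\|=1$ and, up to a subsequence, $v_n\rightharpoonup v$ in $W^{1,q}_0(\Omega)$, $v_n\to v$ in $L^q(\Omega)$. Dividing $\langle I'(u_n),\cdot\rangle$ by $\|u_n\|^{q-1}$ and letting $n\to\infty$ in $W^{-1,q'}(\Omega)$: the $q$-part of $A$ yields exactly $-\Delta_q v_n$; the $p$-part has $W^{-1,q'}(\Omega)$-norm controlled by $C\|\nabla u_n\|_p^{p-1}/\|u_n\|^{q-1}\le C\|u_n\|^{p-q}\to 0$, using $q>p$ and the continuous embedding $W^{1,q}_0(\Omega)\hookrightarrow W^{1,p}_0(\Omega)$ from \eqref{eq:embed-Wq-in-Wr}; the term $\ell_\infty|u_n|^{q-2}u_n/\|u_n\|^{q-1}=\ell_\infty|v_n|^{q-2}v_n$ converges to $\ell_\infty|v|^{q-2}v$ in $L^{q'}(\Omega)$; and $h_n:=f(\cdot,u_n)/\|u_n\|^{q-1}\to 0$ in $L^{q'}(\Omega)$, because \eqref{eq:conseq-f-infty} gives $|h_n|\le\varepsilon|v_n|^{q-1}+A_\varepsilon\|u_n\|^{1-q}$, whence $\limsup_n\|h_n\|_{q'}^{q'}\le C\varepsilon^{q'}\|v\|_q^q$ for every $\varepsilon>0$. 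Therefore $-\Delta_q v_n\to\ell_\infty|v|^{q-2}v$ in $W^{-1,q'}(\Omega)$. Testing this relation against $v_n$ shows $\|\nabla v_n\|_q^q-\ell_\infty\|v_n\|_q^q\to 0$, so $v\neq 0$ (otherwise $\|v_n\|\to 0$, contradicting $\|v_n\|=1$); and since $\langle -\Delta_q v_n,v_n-v\rangle\to 0$, the $(S_+)$-property of $-\Delta_q$ --- which follows from the proof of Lemma~\ref{lem:S+} after deleting the $p$-terms --- yields $v_n\to v$ in $W^{1,q}_0(\Omega)$. Hence $v$ is a Dirichlet eigenfunction of $-\Delta_q$ with $\|v\|=1$ and eigenvalue $\ell_\infty$.

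Under $(f_\mathrm{nr})$ this contradicts $\ell_\infty\notin\sigma(-\Delta_q)$, so $(u_n)$ is bounded. Under $(f_\mathrm{r})$ I would push the argument one step further: from $I(u_n)\to c$, $|\langle I'(u_n),u_n\rangle|\le\|I'(u_n)\|_{-1,q'}(1+\|u_n\|)\to 0$, and the identity
$$
I(u_n)-\tfrac1q\langle I'(u_n),u_n\rangle=\Big(\tfrac1p-\tfrac1q\Big)\|\nabla u_n\|_p^p+\tfrac1q\int_\Omega\big(f(x,u_n)u_n-qF(x,u_n)\big)\,dx,
$$
together with $\tfrac1p-\tfrac1q>0$, one deduces that $\int_\Omega\big(f(x,u_n)u_n-qF(x,u_n)\big)\,dx$ is bounded from above. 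But $(f)$ and $(f_\mathrm{r})$ force $(x,t)\mapsto f(x,t)t-qF(x,t)$ to be bounded below on $\Omega\times\R$ and to diverge to $+\infty$ as $|t|\to\infty$, uniformly in $x$; since $v\neq 0$, on the positive-measure set $\{v\neq 0\}$ we have $|u_n|=|v_n|\,\|u_n\|\to\infty$ a.e., so Fatou's lemma gives $\int_\Omega\big(f(x,u_n)u_n-qF(x,u_n)\big)\,dx\to+\infty$, a contradiction. Thus $(u_n)$ is bounded in this case as well, and the three-step scheme above concludes the proof. The hard part is exactly this boundedness of (CPS)-sequences in the resonant regime $(f_\mathrm{r})$: there the Cerami weight $1+\|u_n\|$ is essential to render $\langle I'(u_n),u_n\rangle$ negligible, and one must first extract from the rescaled sequence a \emph{nontrivial} limit $v$ (an eigenfunction at level $\ell_\infty$) to pin down a region of positive measure where $|u_n|\to\infty$ and so activate the one-sided coercivity encoded in $(f_\mathrm{r})$; by contrast, in the non-resonant case the contradiction comes for free from $\ell_\infty\notin\sigma(-\Delta_q)$.
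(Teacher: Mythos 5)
Your proof is correct. The reduction of the (CPS)-condition to the boundedness of Cerami sequences via the $(S_+)$-property, and the whole non-resonant case (rescaling $v_n=u_n/\|u_n\|$, showing $v\neq 0$, upgrading to strong convergence, and identifying $v$ as a Dirichlet eigenfunction of $-\Delta_q$ at level $\ell_\infty$), follow essentially the paper's own argument. Where you genuinely diverge is the resonant case. The paper drops the rescaling there and argues directly and quantitatively: from $qI(u_n)-\langle I'(u_n),u_n\rangle$ it extracts the bound $K|\{|u_n|\ge T\}|\le qc+C_0+o(1)$ on the measure of superlevel sets, then inserts this into a second identity $I(u_n)-\tfrac1r\langle I'(u_n),u_n\rangle$ with $r>q$ and an interpolation via $L^s$, $s\in(q,q^*)$, choosing $K$ so that $(\ell_\infty+1)S|\{|u_n|\ge T\}|^{(s-q)/s}\le\tfrac12$, which yields coercivity and hence boundedness without any contradiction hypothesis. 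You instead stay inside the blow-up framework: you observe that the construction of the nontrivial limit $v\neq0$ never used non-resonance, use it to produce a positive-measure set on which $|u_n|\to\infty$ a.e. (after passing to an a.e.-convergent subsequence of $v_n$), and then combine Fatou's lemma with the uniform lower bound for $f(x,t)t-qF(x,t)$ on $\Omega\times\R$ (which follows from $(f_\mathrm{r})$ for $|t|\ge T_0$ and $(f)$ for $|t|\le T_0$, exactly as in \eqref{eq:conseq-fres-g}--\eqref{eq:gG}) to contradict the upper bound $\int_\Omega(f(x,u_n)u_n-qF(x,u_n))\,dx\le qc+o(1)$ coming from $I(u_n)-\tfrac1q\langle I'(u_n),u_n\rangle$ and $\tfrac1p-\tfrac1q>0$. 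Both routes are sound; yours is shorter and more conceptual because it recycles the blow-up analysis and only needs the qualitative divergence in $(f_\mathrm{r})$, while the paper's is self-contained in the resonant case and avoids extracting an eigenfunction there. If you write yours up, the two points to make explicit are the uniform lower bound for $f(x,t)t-qF(x,t)$ and the passage to an a.e.-convergent subsequence before invoking Fatou.
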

\begin{proof} Let $(u_n)\subset W^{1,q}_0(\Omega)$ be a sequence satisfying 
\begin{equation}\label{eq:CPS-seq}
I(u_n)\to c\in \R\quad\mbox{and}\quad \|I'(u_n)\|_{{-1,q}}(1+\|u_n\|)\to 0 \mbox{ as }n\to\infty.
\end{equation}
We claim that it is enough to show that $(u_n)$ is convergent, provided it is bounded in $W^{1,q}_0(\Omega)$. This is a quite standard consequence of Lemma \ref{lem:S+}, cf. for instance \cite[Lemma 3.1]{FP} and \cite[Lemma 2]{DJM}. However, we prefer to write here all the details of the proof of this claim, because most of the arguments therein will be useful also in the rest of the proof of the present lemma. Suppose that $(\|\nabla u_n\|_q)$ is bounded. Since $W^{1,q}_0(\Omega)$ is a reflexive Banach space, there exist a subsequence, still denoted by $(u_n)$, and a function $u\in W^{1,q}_0(\Omega)$ for which $u_n\rightharpoonup u$ in $W^{1,q}_0(\Omega)$. Moreover, for every $n\in\N$ we have
\begin{equation}\label{eq:I'un-u}
\begin{aligned}
\langle I'(u_n),u_n-u\rangle =& \int_\Omega (|\nabla u_n|^{p-2}+|\nabla u_n|^{q-2})\nabla u_n (\nabla u_n-\nabla u) dx\\
& - \ell_\infty\int_\Omega |u_n|^{q-2}u_n(u_n-u) dx-\int_\Omega f(x,u_n)(u_n-u)dx
\end{aligned}
\end{equation}
and, by \eqref{eq:CPS-seq} and by the boundedness of $(u_n)$ in $W^{1,q}_0(\Omega)$,
$$
|\langle I'(u_n),u_n-u\rangle|\le \|I'(u_n)\|_{-1,q'}\|u_n-u\|\le \|I'(u_n)\|_{-1,q'}\sup_n \|u_n-u\|=o(1),
$$
where $o(1)\to 0$ as $n\to\infty$. 
On the other hand, by the compact embedding $W^{1,q}_0(\Omega) \hookrightarrow\hookrightarrow L^r(\Omega)$,  $u_n\to u$ in $L^r(\Omega)$ for every $r\in [1,q^*)$. Therefore, by H\"older's inequality 
\begin{equation}\label{eq:unto0}
\left|\int_\Omega |u_n|^{q-2}u_n(u_n-u) dx\right|\le \int_\Omega |u_n|^{q-1}|u_n-u| dx\le \|u_n\|_q^{q-1}\|u_n-u\|_q=o(1)
\end{equation}
and, using \eqref{eq:conseq-f-infty} with $\varepsilon=1$ and \eqref{eq:unto0},
\begin{equation}\label{eq:fto0}
\begin{aligned}
\left|\int_\Omega f(x,u_n)(u_n-u)dx\right|&\le \int_\Omega \left(|u_n|^{q-1}|u_n-u|+A_1 |u_n-u| \right)dx\\
&= o(1)+A_1\|u_n-u\|_1=o(1).
\end{aligned}
\end{equation}
Hence, inserting the last three estimates in \eqref{eq:I'un-u}, we get 
\begin{equation}\label{eq:to0}
\int_\Omega (|\nabla u_n|^{p-2}+|\nabla u_n|^{q-2})\nabla u_n (\nabla u_n-\nabla u) dx=o(1).
\end{equation}
Therefore, by Lemma \ref{lem:S+}, $u_n\to u$ in $W^{1,q}_0(\Omega)$. 

It remains to prove that $(u_n)$ is bounded in $W^{1,q}_0(\Omega)$. We will follow the guidelines of \cite[Proposition 3.1-(i)]{BCS} and of \cite[Proposition 3.1-(ii)]{LZ}.\smallskip 

$\bullet$ We first consider the non-resonant case in which $(f_\mathrm{nr})$ holds.
We argue by contradiction and suppose that there exists a subsequence, still denoted by $(u_n)$, such that  $\|u_n\|\to\infty$. Thus, without loss of generality, we can assume that $\|u_n\|>0$ for every $n$ and define $w_n:=u_n/\|u_n\|$. Clearly, $(w_n)$ is bounded in $W^{1,q}_0(\Omega)$, hence up to a subsequence, $w_n\rightharpoonup w$ in $W^{1,q}_0(\Omega)$ and $w_n\to w$ in $L^r(\Omega)$ for every $r\in [1,q^*)$ and some $w\in W^{1,q}_0(\Omega)$. We claim that $w\neq 0$.

Indeed, as a consequence of the second convergence in \eqref{eq:CPS-seq}, the following relation holds
\begin{equation}\label{eq:conseq1-CPS}
\int_\Omega (|\nabla u_n|^p+|\nabla u_n|^q)dx=\ell_\infty\int_\Omega |u_n|^q dx +\int_\Omega f(x,u_n)u_ndx+o(1).
\end{equation}
Dividing \eqref{eq:conseq1-CPS} by $\|u_n\|^q=\|\nabla u_n\|_q^q$, we get 
\begin{equation}\label{eq:conseq1'-CPS}
\frac{\|\nabla u_n\|_p^p}{\|\nabla u_n\|_q^q}+1=\ell_\infty\|w_n\|^q _q +\int_\Omega \frac{f(x,u_n)u_n}{\|\nabla u_n\|_q^q}dx+o(1).
\end{equation}
Moreover, by H\"older's inequality, 
\begin{equation}\label{eq:goes-to0}
\frac{\|\nabla u_n\|_p^p}{\|\nabla u_n\|_q^q}\le \frac{C}{\|\nabla u_n\|_q^{q-p}}=\frac{C}{\|u_n\|^{q-p}}=o(1).
\end{equation}
Assume by contradiction that $w=0$. Then, since $w_n\to w$ in $L^q(\Omega)$, by \eqref{eq:conseq1'-CPS} we get
\begin{equation}\label{eq:conseq-w=0}
\int_\Omega \frac{f(x,u_n)u_n}{\|\nabla u_n\|_q^q}dx=1+o(1).
\end{equation}
On the other hand, by \eqref{eq:conseq-f-infty}, \eqref{eq:embed-Wq-in-Wr} with $r=1$, and Poincar\'e's inequality, we have
$$
\left|\int_\Omega\frac{f(x,u_n)u_n}{\|\nabla u_n\|_q^q}dx\right|\le \|w_n\|_q^q+A_1\frac{\|u_n\|_1}{\|\nabla u_n\|_q^q}\le \|w_n\|_q^q+\frac{C}{\|\nabla u_n\|_q^{q-1}}=o(1).
$$
This contradicts \eqref{eq:conseq-w=0} and proves that $w\neq 0$. 

Another consequence of \eqref{eq:CPS-seq} is the following
\begin{equation}\label{eq:conseq2-CPS}
\begin{aligned}
\int_\Omega &(|\nabla u_n|^{p-2}+|\nabla u_n|^{q-2})\nabla u_n\nabla \varphi  \,dx \\
&=  \ell_\infty\int_\Omega |u_n|^{q-2}u_n \varphi \,dx +\int_\Omega f(x,u_n)\varphi \,dx+o(1)\mbox{ for every } \varphi\in W^{1,q}_0(\Omega).
\end{aligned}
\end{equation}
We divide \eqref{eq:conseq2-CPS} by $\|u_n\|^{q-1}=\|\nabla u_n\|_q^{q-1}$ to get for every $\varphi\in W^{1,q}_0(\Omega)$
\begin{equation}\label{eq:conseq2'-CPS}
\begin{aligned}
\int_\Omega\left(\frac{|\nabla u_n|^{p-2}\nabla u_n}{\|\nabla u_n\|_q^{q-1}}+|\nabla w_n|^{q-2}\nabla w_n\right)&\nabla \varphi \,dx\\
& \hspace{-3cm} = \ell_\infty\int_\Omega |w_n|^{q-2}w_n \varphi \,dx+\int_\Omega\frac{f(x,u_n)\varphi}{\|\nabla u_n\|_q^{q-1}}dx+o(1).
\end{aligned}
\end{equation}
Now, by H\"older's inequality and \eqref{eq:embed-Wq-in-Wr}, 
\begin{equation}\label{eq:nablato0}
\left|\int_\Omega\frac{|\nabla u_n|^{p-2}\nabla u_n\nabla \varphi }{\|\nabla u_n\|_q^{q-1}}dx \right|\le 
\int_\Omega\frac{|\nabla u_n|^{p-1}|\nabla \varphi|}{\|\nabla u_n\|_q^{q-1}}dx\le C\frac{\|\nabla \varphi\|_p}{\|\nabla u_n\|_q^{q-p}}=o(1).
\end{equation}
Thus, \eqref{eq:conseq2'-CPS} evaluated at $\varphi=w$ can be written as 
\begin{equation}\label{eq:conseq2-CPS-w}
\int_\Omega |\nabla w_n|^{q-2}\nabla w_n \nabla w \,dx  = \ell_\infty\int_\Omega |w_n|^{q-2}w_n w \,dx+\int_\Omega\frac{f(x,u_n)w}{\|\nabla u_n\|_q^{q-1}}dx+o(1).
\end{equation}
On the other hand, taking into account \eqref{eq:goes-to0} and being $\int_\Omega|\nabla w_n|^qdx=1$ for every $n$, \eqref{eq:conseq1'-CPS} reads as
\begin{equation}\label{eq:conseq1''-CPS}
\int_\Omega |\nabla w_n|^{q-2}\nabla w_n \nabla w_n dx  = \ell_\infty\int_\Omega |w_n|^{q-2}w_nw_n dx+\int_\Omega\frac{f(x,u_n)w_n}{\|\nabla u_n\|_q^{q-1}}dx+o(1).
\end{equation}
Thus, subtracting \eqref{eq:conseq1''-CPS} and \eqref{eq:conseq2-CPS-w}, we get 
\begin{equation}\label{eq:subtr}
\begin{aligned}
\int_\Omega &|\nabla w_n|^{q-2}\nabla w_n (\nabla w_n-\nabla w) dx \\
& = \ell_\infty\int_\Omega |w_n|^{q-2}w_n(w_n-w) dx+\int_\Omega\frac{f(x,u_n)}{\|\nabla u_n\|_q^{q-1}}(w_n-w)dx+o(1).
\end{aligned}
\end{equation}
Arguing as in \eqref{eq:unto0} and \eqref{eq:fto0}, we get  
$$
\begin{gathered}
\left|\int_\Omega |w_n|^{q-2}w_n(w_n-w) dx\right|\le \|w_n\|_q^{q-1}\|w_n-w\|_q=o(1),\\
\left|\int_\Omega\frac{f(x,u_n)}{\|\nabla u_n\|_q^{q-1}}(w_n-w)dx\right| \le \int_\Omega \left(|w_n|^{q-1}|w_n-w|+A_1\frac{|w_n-w|}{\|\nabla u_n\|_q^{q-1}}\right)dx =o(1),
\end{gathered}
$$
Definitely, inserting the last two relations in \eqref{eq:subtr}, we have that also 
\begin{equation}\label{eq:forS+}
\int_\Omega |\nabla w_n|^{q-2}\nabla w_n (\nabla w_n-\nabla w) dx=o(1). 
\end{equation}
Since $-\Delta_q$ satisfies the $(S_+)$-property (cf. for instance \cite[Lemma 2.5]{CPV}) and $w_n \rightharpoonup w$ in $W^{1,q}_0(\Omega)$, \eqref{eq:forS+} implies that $w_n\to w$ in $W^{1,q}_0(\Omega)$. 
Finally, by \eqref{eq:conseq2'-CPS} and \eqref{eq:nablato0}, for every $\varphi\in W^{1,q}_0(\Omega)$
\begin{equation}\label{eq:eigen}
\int_\Omega |\nabla w_n|^{q-2}\nabla w_n \nabla \varphi \,dx = \ell_\infty\int_\Omega |w_n|^{q-2}w_n \varphi \,dx+\int_\Omega\frac{f(x,u_n)\varphi}{\|\nabla u_n\|_q^{q-1}}dx+o(1).
\end{equation}
By \eqref{eq:conseq-f-infty}, the fact that $(u_n)$ is convergent and so bounded in $L^q(\Omega)$, and $\|\nabla u_n\|_q\to \infty$,
$$
\int_\Omega\frac{f(x,u_n)\varphi}{\|\nabla u_n\|_q^{q-1}}dx\le \frac{\|u_n\|_q^{q-1}\|\varphi\|_q}{\|\nabla u_n\|_q^{q-1}}+o(1)\le\frac{C\|\varphi\|_q}{\|\nabla u_n\|_q^{q-1}}+o(1)= o(1).
$$
And so, passing to the limit in \eqref{eq:eigen} we get, by the Dominated Convergence Theorem,
$$
\int_\Omega |\nabla w|^{q-2}\nabla w  \nabla \varphi \,dx = \ell_\infty\int_\Omega |w |^{q-2}w  \varphi \,dx\quad\mbox{for every }\varphi\in W^{1,q}_0(\Omega),
$$
meaning that $\ell_\infty$ is an eigenvalue of $-\Delta_q$. This contradicts $(f_\mathrm{nr})$ and concludes the proof of this case.\smallskip

$\bullet$ We now consider the resonant case and assume the validity of $(f_\mathrm{r})$. We observe that in this case $\ell_\infty>0$. In this part of the proof, for brevity, we will write the nonlinear terms $\ell_\infty |t|^{q-2}t+f(x,t)$ as $g(x,t)$.
Let $G(x,t):=\int_0^t g(x,s) ds$. It is straightforward to check that $g(x,t)t-qG(x,t)=f(x,t)t-qF(x,t)$ for  a.e. $x\in\Omega$ and every $t\in\R$. Thus, the limit in $(f_\mathrm{r})$ can be rewritten equivalently in terms of $g$ as follows
\begin{equation}\label{eq:fresonant-g}
\lim_{|t|\to\infty}(g(x,t)t-qG(x,t))=+\infty\quad\mbox{uniformly in }x\in\Omega.
\end{equation}
As a consequence, there exists $T_0>0$ such that 
\begin{equation}\label{eq:conseq-fres-g}
g(x,t)t-qG(x,t)\ge 0\quad\mbox{for a.e. }x\in\Omega\mbox{ and for every }|t|\ge T_0.
\end{equation}
By virtue of $(f)$, there exists $C_0>0$ such that 
\begin{equation}\label{eq:gG}
\int_{\{|u_n|\le T_0\}}(g(x,u_n)u_n-qG(x,u_n))dx\ge -C_0,
\end{equation}
where for simplicity in notation we have denoted $\{|u_n|\le T_0\}= \{x\in\Omega\,:\,|u_n(x)|\le T_0\}$. 
Now, by \eqref{eq:CPS-seq}, 
$$
\begin{aligned}
qc+o(1)& =qI(u_n)-\langle I'(u_n),u_n\rangle\\
&=\left(\frac{q}{p}-1\right)\int_\Omega|\nabla u_n|^pdx+\int_\Omega (g(x,u_n)u_n-qG(x,u_n))dx
\end{aligned}
$$
Hence, by \eqref{eq:gG} and being $q>p$,
\begin{equation}\label{eq:qc}
qc+o(1)\ge \int_\Omega (g(x,u_n)u_n-qG(x,u_n))dx\ge \int_{\{|u_n|\ge T_0\}} (g(x,u_n)u_n-qG(x,u_n))dx-C_0.
\end{equation}
Now, let $K>0$ be a constant to be specified later. By \eqref{eq:fresonant-g}, there exists $T_K\ge T_0>0$ such that 
\begin{equation}\label{eq:conseq-fres-g-K}
g(x,t)t-qG(x,t)\ge K\quad\mbox{for a.e. }x\in\Omega\mbox{ and for every }|t|\ge T_K.
\end{equation}

Thus, continuing from \eqref{eq:qc}, we have
\begin{equation}\label{eq:qc1}
qc+o(1)\ge K|{\{|u_n|\ge T_K\}}|-C_0.
\end{equation}
On the other hand, for every $r>q>p$, by \eqref{eq:CPS-seq},
$$
\begin{aligned}
I(u_n)-\frac{1}{r}\langle I'(u_n),u_n\rangle &=
\left(\frac{1}{p}-\frac{1}{r}\right)\|\nabla u_n\|_p^p\\
&\phantom{=}+\left(\frac{1}{q}-\frac{1}{r}\right)\|\nabla u_n\|_q^q-\int_\Omega[G(x,u_n)-\frac{1}{r}g(x,u_n)u_n]dx\\
&=c+o(1)
\end{aligned}
$$
Since, by $(f)$, $\left|\int_{\{|u_n|\le T_K\}}[G(x,u_n)-\frac{1}{r}g(x,u_n)u_n]dx\right|\le C_K$ for some $C_K>0$, 
$$
\begin{aligned}
c+o(1)&\ge \left(\frac{1}{q}-\frac{1}{r}\right)\|\nabla u_n\|_q^q-\int_{\{|u_n|\ge T_K\}}[G(x,u_n)-\frac{1}{r}g(x,u_n)u_n]dx-C_K\\
&\ge \left(\frac{1}{q}-\frac{1}{r}\right)\|\nabla u_n\|_q^q-\int_{\{|u_n|\ge T_K\}}\left[\frac{g(x,u_n)u_n-K}{q}-\frac{1}{r}g(x,u_n)u_n\right]dx-C_K\\
&\ge \left(\frac{1}{q}-\frac{1}{r}\right)\|\nabla u_n\|_q^q-\left(\frac{1}{q}-\frac{1}{r}\right)\int_{\{|u_n|\ge T_K\}}g(x,u_n)u_n dx-C_K\\
&\ge \left(\frac{1}{q}-\frac{1}{r}\right)\left[\|\nabla u_n\|_q^q-\int_{\{|u_n|\ge T_K\}}[(\ell_\infty+1)|u_n|^q+A_1|u_n|]dx\right]-C_K\\
&\ge \left(\frac{1}{q}-\frac{1}{r}\right)\left[\|\nabla u_n\|_q^q-\int_{\{|u_n|\ge T_K\}}(\ell_\infty+1)|u_n|^q dx-A_1|\Omega|^\frac{1}{q'}\|u_n\|_q\right]-C_K\\
&\ge \left(\frac{1}{q}-\frac{1}{r}\right)\left[\|\nabla u_n\|_q^q-\int_{\{|u_n|\ge T_K\}}(\ell_\infty+1)|u_n|^q dx-C\|\nabla u_n\|_q\right]-C_K
\end{aligned}
$$
where we have used \eqref{eq:conseq-fres-g-K}, \eqref{eq:conseq-f-infty}, H\"older's and Poincar\'e's inequalities. Now, fix  $s\in (q,q^*)$. By H\"older's inequality, 
$$
\int_{\{|u_n|\ge T_K\}}|u_n|^q dx\le \left|\{|u_n|\ge T_K\}\right|^{\frac{s-q}s}\|u_n\|_s^q.
$$
In view of the embedding $W^{1,q}_0(\Omega)\hookrightarrow L^s(\Omega)$, let $C_S>0$ be the best constant such that the following inequality holds for every $u\in W^{1,q}_0(\Omega)$
$$
\|u\|_s^q\le C_S \|\nabla u\|_q^q.
$$
Thus, continuing the previous estimate, we get 
\begin{equation}\label{eq:quasi.final}
\begin{aligned}
c+o(1)&\ge \left(\frac{1}{q}-\frac{1}{r}\right)\left(1-(\ell_\infty+1)C_S|\{|u_n|\ge T_K\}|^{\frac{s-q}s}\right)\|\nabla u_n\|_q^q -C\|\nabla u_n\|_q-C_K.
\end{aligned}
\end{equation}
Now, it suffices to choose $K$ in such a way that 
$$
1-(\ell_\infty+1)C_S|\{|u_n|\ge T_K\}|^{\frac{s-q}s}\ge\frac12+o(1).
$$ 
This is possible thanks to \eqref{eq:qc1}, taking $K=(qc+C_0)[2C_S(\ell_\infty+1)]^{\frac{s}{s-q}}$.
In conclusion, \eqref{eq:quasi.final} gives 
$$
C\|\nabla u_n\|_q+c+C_K+o(1)\ge C'\|\nabla u_n\|_q^q
$$ 
for a suitable positive constant $C'$. This proves the boundedness of $(\|u_n\|)$ and concludes the proof.
\end{proof}

\subsection{Geometry conditions and proof of Theorem \ref{thm:main}}

\subsubsection{The case $(f_0^-)$.}\label{subsubsec:ell0<0}
In this part, we consider the sequences of quasi-eigenvalues $(\eta^{(\alpha)}_h)$ and $(\nu^{(\alpha)}_h)$ introduced in Section \ref{sec:sec2} related to the classical $q$-Laplacian case, namely with $\alpha=0$. Since $\alpha=0$ is fixed, we will drop the superscript $(0)$ throughout the present Subsection \ref{subsubsec:ell0<0}. 

\begin{lemma}\label{lem:palletta>0}
Assume that  $(f)$, $(f_\infty)$, and $(f_0^-)$ hold. If  
\begin{equation}\label{eq:cond-l0<0}
\ell_\infty+\ell_0<\eta_h\quad\mbox{for some }h\in\N,
\end{equation}
there exist $\rho>0$ sufficiently small and a constant $c_0>0$ such that
$$
I(u)\ge c_0\quad\mbox{for every }u\in S_\rho\cap W,
$$
where $S_\rho=\{u\in W^{1,q}_0(\Omega)\,:\,\|\nabla u\|_q=\rho\}$ and $W\subset W^{1,q}_0(\Omega)$ is a closed subspace such that $\mathrm{codim}W=h-1$.
\end{lemma}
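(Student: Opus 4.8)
The plan is to show that near the origin the nonlinearity $g(x,t)=\ell_\infty|t|^{q-2}t+f(x,t)$ contributes at most like $(\ell_\infty+\ell_0+\varepsilon)|t|^q/q$ to the energy, and then exploit the quasi-eigenvalue inequality \eqref{eq:disug-Wh-alpha0} on the subspace $W=W_{h-1}$ to absorb this term. First I would record the consequence of $(f_0)$ together with $(f)$: for every $\varepsilon>0$ there is $\delta_\varepsilon>0$ with $|f(x,t)|\le(|\ell_0|+\varepsilon)|t|^{q-1}$ for $|t|\le\delta_\varepsilon$, hence $F(x,t)\le\frac{\ell_0+\varepsilon}{q}|t|^q$ for such $t$ (using $\ell_0\le 0$, so $\ell_0+\varepsilon$ is small). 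For $|t|>\delta_\varepsilon$ one controls $F$ crudely by \eqref{eq:conseq-f-infty}: $|F(x,t)|\le\frac{\varepsilon}{q}|t|^q+A_\varepsilon|t|$, and since on $S_\rho$ we will have $\|u\|_q$ small, the term $A_\varepsilon|t|$ integrates to something $o(\rho)$-controlled by Poincar\'e/H\"older. Combining, $\int_\Omega F(x,u)\,dx\le\frac{\ell_0+2\varepsilon}{q}\|u\|_q^q+C_\varepsilon\|u\|^{?}$ with a remainder of order higher than $\|u\|^q$ in the relevant regime; I would be a little careful here because $q>1$ but the subcritical lower-order term scales like $\|u\|$, so on $S_\rho$ with $\rho$ small this is fine as long as we keep track that $\|u\|\ge c\|u\|_q$ is false in general — rather we need $\|u\|_q\le C\|u\|$ (Poincar\'e), which is the direction we have.

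Next I would estimate $I(u)$ from below for $u\in S_\rho\cap W_{h-1}$, i.e. $\|\nabla u\|_q=\rho$. Dropping the nonnegative $p$-term, $I(u)\ge\frac1q\|\nabla u\|_q^q-\frac{\ell_\infty}{q}\|u\|_q^q-\int_\Omega F(x,u)\,dx\ge\frac1q\|\nabla u\|_q^q-\frac{\ell_\infty+\ell_0+2\varepsilon}{q}\|u\|_q^q-(\text{lower order})$. By \eqref{eq:disug-Wh-alpha0} with $\alpha=0$ applied in $W_{h-1}$, $\|u\|_q^q\le\eta_h^{-1}\|\nabla u\|_q^q$, so $I(u)\ge\frac1q\bigl(1-\frac{\ell_\infty+\ell_0+2\varepsilon}{\eta_h}\bigr)\|\nabla u\|_q^q-(\text{l.o.t.})$. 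Since by \eqref{eq:cond-l0<0} $\ell_\infty+\ell_0<\eta_h$, choosing $\varepsilon$ small makes the coefficient $a_\varepsilon:=\frac1q(1-\frac{\ell_\infty+\ell_0+2\varepsilon}{\eta_h})>0$, giving $I(u)\ge a_\varepsilon\rho^q-C_\varepsilon\rho^{\sigma}$ for some exponent $\sigma>q$ coming from the honest bookkeeping of the remainder (or $\sigma=1<q$ if one argues directly that the $A_\varepsilon$-term, after being multiplied through, is negligible — one has to be slightly cautious and it is cleanest to split $F$ differently so the remainder is genuinely superlinear in $\rho$). Then for $\rho$ small enough the right-hand side is $\ge c_0:=\frac{a_\varepsilon}{2}\rho^q>0$, and $W:=W_{h-1}$ has $\operatorname{codim}W=h-1$ by the remark after Lemma~\ref{lem:primadec}.

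I also need to handle $\ell_0=-\infty$: then $(f_0)$ means $f(x,t)/(|t|^{q-2}t)\to-\infty$, so $f(x,t)t\le -M|t|^q$ near $0$ for every $M$, hence $F(x,t)\le-\frac{M}{q}|t|^q$ near $0$; this only helps (the $F$-integral becomes even more negative, i.e. $-\int F$ becomes large positive), so the estimate is even easier and one can take any $h$ with $\ell_\infty<\eta_h$ — consistent with the remark in the introduction that $\ell_0=-\infty$ forces $\ell_0+\ell_\infty<\eta_1$.

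The main obstacle is the careful treatment of the ``far'' part $|t|>\delta_\varepsilon$ of the nonlinearity so that its contribution to $\int_\Omega F(x,u)\,dx$ is of strictly higher order than $\|\nabla u\|_q^q=\rho^q$ as $\rho\to0$; a naive use of \eqref{eq:conseq-f-infty} produces an $O(\rho)$ term which does \emph{not} beat $\rho^q$ when $q>1$. The fix is to note that on the set $\{|u|>\delta_\varepsilon\}$ we may write $|F(x,u)|\le\frac{\varepsilon}{q}|u|^q+A_\varepsilon|u|\le\frac{\varepsilon}{q}|u|^q+A_\varepsilon\delta_\varepsilon^{1-s}|u|^s$ for any $s\in(q,q^*)$ (since $|u|>\delta_\varepsilon$ there), and then $\int_\Omega|u|^s\,dx\le S\|\nabla u\|_q^s=S\rho^s$ with $s>q$, so the remainder is genuinely $O(\rho^s)+$ the $\varepsilon$-small $|u|^q$-term which gets absorbed into the $\eta_h$ estimate exactly as above. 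With this the argument closes cleanly.
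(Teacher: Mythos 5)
Your argument is correct and is essentially the paper's proof: your near/far splitting of $F$ (with the observation that on $\{|t|>\delta_\varepsilon\}$ the lower-order remainder can be converted into a power $|t|^{s}$ with $s>q$) is precisely how one obtains the paper's key inequality \eqref{eq:conseq-hp-f}, namely $F(x,t)\le \frac{\ell_0+\varepsilon}{q}|t|^q+a_\varepsilon|t|^{s+q}$, after which both proofs drop the nonnegative $p$-term, invoke \eqref{eq:disug-Wh-alpha0} on $W=W_{h-1}$ and the Sobolev embedding, and choose $\rho$ small; the $\ell_0=-\infty$ case is likewise handled the same way. One small caution: the bound $F(x,t)\le\frac{\ell_0+\varepsilon}{q}|t|^q$ near $0$ must be drawn from the one-sided consequence of $(f_0)$, i.e.\ $f(x,t)\,\mathrm{sgn}(t)\le(\ell_0+\varepsilon)|t|^{q-1}$ for $|t|\le\delta_\varepsilon$, not from the two-sided bound $|f(x,t)|\le(|\ell_0|+\varepsilon)|t|^{q-1}$ as you wrote it, since the latter only gives the coefficient $-\ell_0+\varepsilon$, which is too large when $\ell_0<0$.
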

\begin{proof}
$\bullet$  We first consider the case $\ell_0>-\infty$. By $(f)$, $(f_\infty)$, and $(f_0^-)$, for every $\varepsilon>0$ and every $s\in[0,q^*-q)$ there exists $a_\varepsilon>0$ such that  
\begin{equation}\label{eq:conseq-hp-f}
F(x,t)\le \frac{\varepsilon+\ell_0}{q}|t|^q+a_\varepsilon |t|^{s+q}\quad \mbox{for a.e. }x\in \Omega \mbox{ and every }t\in\R.
\end{equation}
Therefore, for every $u\in W^{1,q}_0(\Omega)$, by the Sobolev embedding $W^{1,q}_0(\Omega)\hookrightarrow L^{s+q}(\Omega)$
$$
\begin{aligned}
I(u)&\ge \frac{1}{p}\|\nabla u\|_p^p+\frac{1}{q}\|\nabla u\|_q^q-\frac{\ell_\infty+\varepsilon+\ell_0}{q}\|u\|_q^q- a_\varepsilon\|u\|_{s+q}^{s+q}\\
&\ge \frac{1}{q}\|\nabla u\|_q^q-\frac{\ell_\infty+\varepsilon+\ell_0}{q}\|u\|_q^q- C\|\nabla u\|_{q}^{s+q}.
\end{aligned}
$$
Moreover, we consider the closed subspace $W_{h-1}$ defined in \eqref{eq:Wh-def}. By \eqref{eq:disug-Wh-alpha0}, the following inequality holds
$$
\eta_{h}\|u\|_q^q\le \|\nabla u\|_q^q \quad\mbox{for every }u\in W_{h-1}.
$$
Hence, fixed $\varepsilon\in(\max\{0,-\ell_\infty-\ell_0\}, \eta_h-\ell_\infty-\ell_0)$, by \eqref{eq:cond-l0<0}, there exists $\rho>0$ so small that for every $u\in S_\rho\cap W_{h-1}$, $$
I(u)\ge \frac1q\left(1-\frac{\ell_\infty+\varepsilon+\ell_0}{\eta_h}\right)\|\nabla u\|_q^q- C\|\nabla u\|_{q}^{s+q}\ge c_0
$$
for some $c_0>0$ and the conclusion follows with $W=W_{h-1}$ being, by Lemma \ref{lem:primadec}, $\mathrm{codim}W_{h-1}=h-1$.\smallskip

$\bullet$ If $\ell_0=-\infty$, clearly condition \eqref{eq:cond-l0<0} is trivially verified also when $h=1$. Again by the assumptions on $f$, for every $M>0$ there exists $a_M>0$ such that 
$$
F(x,t)\le -\frac{M}{q}|t|^q+a_M |t|^{s+q}\quad \mbox{for a.e. }x\in \Omega \mbox{ and every }t\in\R.
$$
Hence, for every $u\in W^{1,q}_0(\Omega)$, taking $M>\max\{0,\ell_\infty\}$, 
$$
\begin{aligned}
I(u)&\ge  \frac{1}{q}\|\nabla u\|_q^q-\frac{\ell_\infty-M}{q}\|u\|_q^q- C\|\nabla u\|_{q}^{s+q}\\
&\ge \frac{1}{q}\|\nabla u\|_q^q- C\|\nabla u\|_{q}^{s+q},
\end{aligned}
$$
and the conclusion follows as in the previous case in $S_\rho\cap W_0=S_\rho\cap W^{1,q}_0(\Omega)=S_\rho$ with $\rho>0$ small enough.
\end{proof}

\begin{remark}\label{rmk:remark-eta1}
A careful inspection in the proof of the previous lemma shows that the result holds true also under the assumption 
\begin{equation}\label{eq:alternative}
-\infty<\ell_\infty+\ell_0<\eta_h^{(1)}.
\end{equation}
Indeed, in this case, by \eqref{eq:disug-Wh}, for every $u\in\{ u\in W^{(1)}_{h-1}\,:\,\|u\|_q\le 1\}$
$$
\begin{aligned}
I(u)&\ge \frac{1}{q}(\|\nabla u\|_p^p+\|\nabla u\|_q^q)-\frac{\ell_\infty+\varepsilon+\ell_0}{q}\|u\|_q^q- C\|\nabla u\|_{q}^{s+q}\\
&\ge \frac1q\left(1-\frac{\ell_\infty+\varepsilon+\ell_0}{\eta^{(1)}_h}\right)(\|\nabla u\|_p^p+\|\nabla u\|_q^q)- C\|\nabla u\|_{q}^{s+q}\\
&\ge \frac1q\left(1-\frac{\ell_\infty+\varepsilon+\ell_0}{\eta^{(1)}_h}\right)\|\nabla u\|_q^q- C\|\nabla u\|_{q}^{s+q}.
\end{aligned}
$$
Now, if  $\|\nabla u\|_q=\rho$ for $0<\rho<\lambda_1^{1/q}$, by Poincar\'e's inequality $\|u\|_q\le 1$, and so $S_\rho\cap W^{(1)}_{h-1}\cap\{\|u\|_q\le 1\}=S_\rho\cap W^{(1)}_{h-1}$. Hence, for $\rho$ small enough, $I(u)\ge c_0>0$ in $S_\rho\cap W^{(1)}_{h-1}$.
We observe that, by Remark \ref{rmk:remark2}, at least for $h=1$ and $q\le p^*$, $\eta_1^{(1)}>\eta_1^{(0)}$, and so condition \eqref{eq:alternative} is weaker than  $\ell_\infty+\ell_0<\eta_h^{(0)}$.
\end{remark}

\begin{lemma}\label{lem:to-infty}
Assume that  $(f)$ and $(f_\infty)$ hold. If, for some $k\in\N$, $\ell_\infty>\nu_k$, there exist a $k$-dimensional closed subspace $V\subset W^{1,q}_0(\Omega)$, and a constant $c_\infty\in (c_0,\infty)$ such that 
$$
I(u)\le c_\infty\quad\mbox{for every }u\in V.
$$
\end{lemma}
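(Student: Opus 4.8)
The plan is to use the variational characterization \eqref{eq:nuh-def} of $\nu_k=\nu_k^{(0)}$ to produce a finite-dimensional subspace on which the ``quadratic'' part $\frac1q(\|\nabla u\|_q^q-\ell_\infty\|u\|_q^q)$ of $I$ is negative definite, and then to absorb the lower-order $p$-gradient term and the sublinear remainder coming from $f$.

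First, since $\nu_k<\ell_\infty$, by \eqref{eq:nuh-def} there is a subspace $V'\in\mathbb W_k$ (so $\varphi_1\in V'$ and $\dim V'\ge k$) with $\sup_{u\in V'\setminus\{0\}}\|\nabla u\|_q^q/\|u\|_q^q<\ell_\infty$. Choosing any $k$-dimensional subspace $V\subset V'$ containing $\varphi_1$, the supremum over $V$ is no larger, so there is $\mu\in[\nu_k,\ell_\infty)$ with
$$
\|\nabla u\|_q^q\le\mu\|u\|_q^q\qquad\text{for every }u\in V .
$$
Since $\|\cdot\|_q$ is a genuine norm on $W^{1,q}_0(\Omega)$, hence on the finite-dimensional space $V$, this also yields $\|\nabla u\|_q^p\le\mu^{p/q}\|u\|_q^p$ on $V$; combined with H\"older's inequality $\|\nabla u\|_p^p\le|\Omega|^{(q-p)/q}\|\nabla u\|_q^p$ we get $\|\nabla u\|_p^p\le C\|u\|_q^p$ on $V$.

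Next I would estimate $I$ from above on $V$. Integrating \eqref{eq:conseq-f-infty} gives, for every $\varepsilon>0$, a.e.\ $x\in\Omega$ and all $t\in\R$, $F(x,t)\ge-\tfrac{\varepsilon}{q}|t|^q-A_\varepsilon|t|$, hence $-\int_\Omega F(x,u)\,dx\le\tfrac{\varepsilon}{q}\|u\|_q^q+A_\varepsilon\|u\|_1$. Using this, the bound on $\|\nabla u\|_p^p$ above, \eqref{eq:embed-Wq-in-Wr} with $r=1$, and the spectral inequality, I obtain for every $u\in V$
$$
I(u)\le C\|u\|_q^p+\frac{\mu-\ell_\infty+\varepsilon}{q}\|u\|_q^q+C\|u\|_q .
$$
Fixing $\varepsilon$ so small that $\delta:=\ell_\infty-\mu-\varepsilon>0$, the right-hand side is a function of $t:=\|u\|_q\ge 0$ of the form $Ct^p-\tfrac{\delta}{q}t^q+Ct$, which tends to $-\infty$ as $t\to\infty$ because $q>p>1$; hence it is bounded above on $[0,\infty)$ by some constant. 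Taking $c_\infty$ to be the maximum of that constant and $c_0+1$ gives $c_\infty\in(c_0,\infty)$ and $I(u)\le c_\infty$ for all $u\in V$, which is the assertion.

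I do not expect a serious obstacle: the argument reduces to the elementary fact that for $q>p>1$ the polynomial $Ct^p-\tfrac{\delta}{q}t^q+Ct$ is bounded above, once the spectral gap $\ell_\infty-\nu_k>0$ has been exploited to make the leading $t^q$ coefficient negative. The only points that need a little care are selecting $\varepsilon$ \emph{after} $\mu$ so that the gap survives, and recalling that on the finite-dimensional $V$ the $p$-gradient term is genuinely of lower order than $\|u\|_q^q$ (this is where $\Omega$ bounded and $p<q$ are used).
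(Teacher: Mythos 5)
Your proposal is correct and follows essentially the same route as the paper: extract from the definition \eqref{eq:nuh-def} a $k$-dimensional subspace on which $\|\nabla u\|_q^q\le\mu\|u\|_q^q$ with $\mu<\ell_\infty$, control $\|\nabla u\|_p^p$ by H\"older on the bounded domain, bound $-\int_\Omega F(x,u)\,dx$ via \eqref{eq:conseq-f-infty}, and conclude from the one-variable bound $C t^p-\tfrac{\delta}{q}t^q+Ct$ with $q>p>1$. The only cosmetic difference is that the paper concludes by noting $I\to-\infty$ on the finite-dimensional $V$ as $\|u\|_q\to\infty$, whereas you bound the scalar polynomial directly, which amounts to the same thing.
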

\begin{proof}
By \eqref{eq:conseq-f-infty}, for every $\varepsilon>0$ and $u\in W^{1,q}_0(\Omega)$, we have 
$$
\begin{aligned}
I(u)&\le \frac{1}{p}\|\nabla u\|_p^p+\frac{1}{q}\|\nabla u\|_q^q-\frac{\ell_\infty-\varepsilon/2}{q}
\|u\|_q^q+A_{\varepsilon/2}\|u\|_1\\
&\le C\|\nabla u\|_q^p+\frac{1}{q}\|\nabla u\|_q^q-\frac{\ell_\infty-\varepsilon/2}{q}
\|u\|_q^q+C'\|u\|_q
\end{aligned}
$$
where in the last estimate we have used H\"older's inequality, being $1<p<q$. Now, by \eqref{eq:nuh-def}, there exists a $k$-dimensional subspace $V_k^\varepsilon\subset W^{1,q}_0(\Omega)$ such that for every $u\in V_k^\varepsilon$
$$
\|\nabla u\|_q<\left(\nu_k+\frac{\varepsilon}{2}\right)^{1/q}\|u\|_q.
$$
Therefore, for every $u\in V_k^\varepsilon$
$$
I(u)\le \frac{\nu_k-\ell_\infty+\varepsilon}{q}\|u\|_q^q+C\left(\nu_k +\frac{\varepsilon}{2}\right)^{p/q}\|u\|_q^p+C'\|u\|_q
$$
Hence, being by assumption $\ell_\infty>\nu_k$, for $\varepsilon$ sufficiently small $I|_{V_k^\varepsilon}(u)\to-\infty$ as $\|u\|_q\to\infty$. Being $V_k^\varepsilon$ a finite dimensional linear space, of dimension $k$, the proof is concluded by taking $V=V_k^\varepsilon$ and enlarging if necessary $c_\infty$ to make it greater than $c_0$.
\end{proof}

We are now ready to prove the first part of Theorem \ref{thm:main}.

\begin{proof}[$\bullet$ Proof of Theorem \ref{thm:main}-$(H_-)$] In view of Lemmas \ref{lem:CPS}, \ref{lem:palletta>0} and \ref{lem:to-infty}, we can apply Theorem \ref{thm:multiplicity} with $X=W^{1,q}_0(\Omega)$, $W=W_{h-1}$, and $V=V_k^\varepsilon$.
\end{proof}

\subsubsection{The case $(f_0^+)$.}
In this part, we consider the sequences of quasi-eigenvalues $(\eta^{(0)}_h)$ and $(\nu^{(1)}_h)$ introduced in Section \ref{sec:sec2}.

\begin{lemma}\label{lem:palletta>0>0}
Assume that  $(f)$, $(f_\infty)$, and $(f_0^+)$ hold. If  
\begin{equation}\label{eq:cond-l0>0}
\ell'_0>\nu^{(1)}_k\quad\mbox{for some }k\in\N,
\end{equation}
there exist a $k$-dimensional closed subspace $V\subset W^{1,q}_0(\Omega)$, and two positive constants $\rho>0$ and $c_0>0$ such that
$$
-I(u)\ge c_0\quad\mbox{for every }u\in S_\rho\cap V.
$$
In particular, if $\ell'_0=\infty$, the conclusion holds for every $k\in\N$.
\end{lemma}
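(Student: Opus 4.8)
The plan is to adapt the argument of Lemma~\ref{lem:palletta>0}, interchanging the roles of upper and lower bounds: since $\ell_0\ge0$, the primitive $F$ is bounded \emph{below} near the origin by essentially $\frac{\ell_0}{q}|t|^q$, and we compare with the sequence $(\nu^{(1)}_k)$ rather than with $(\eta_h)$. Concretely, I would first fix an exponent $s\in(0,q^*-q)$ and show, using $(f)$, $(f_0)$ and $(f_\infty)$ exactly as in~\eqref{eq:conseq-hp-f}, that for every $\varepsilon>0$ there is $C_\varepsilon>0$ with
$$
F(x,t)\ge \frac{\ell_0-\varepsilon}{q}|t|^q-C_\varepsilon|t|^{s+q}\qquad\text{for a.e. }x\in\Omega\text{ and every }t\in\R ,
$$
which is the lower-bound counterpart of~\eqref{eq:conseq-hp-f}: the behaviour near $0$ is governed by $(f_0)$ (recall $\ell_0\ge0$), the intermediate range by $(f)$ after enlarging $C_\varepsilon$, and the large values by the global estimate $|F(x,t)|\le\frac1q|t|^q+A_1|t|$ coming from~\eqref{eq:conseq-f-infty}, since $|t|^{s+q}$ dominates both $|t|^q$ and $|t|$ as soon as $|t|$ is bounded away from $0$. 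When $\ell_0=+\infty$, the same reasoning gives, for every $M>0$, a constant $C_M>0$ with $F(x,t)\ge\frac Mq|t|^q-C_M|t|^{s+q}$.

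Next I would use the variational characterization~\eqref{eq:nuh-def} of $\nu^{(1)}_k$ to pick, for the same $\varepsilon$, a $k$-dimensional subspace $V=V^\varepsilon_k\subset W^{1,q}_0(\Omega)$ (containing $\varphi_1$) on which $\|\nabla u\|_p^p+\|\nabla u\|_q^q\le(\nu^{(1)}_k+\varepsilon)\|u\|_q^q$. Plugging the lower bound for $F$ into $I$, using the Sobolev embedding $W^{1,q}_0(\Omega)\hookrightarrow L^{s+q}(\Omega)$ and the elementary inequality $\frac1p\|\nabla u\|_p^p+\frac1q\|\nabla u\|_q^q\le\frac1p(\|\nabla u\|_p^p+\|\nabla u\|_q^q)$ (valid because $p<q$), I obtain for every $u\in V$
$$
I(u)\le\frac1q\Big[\frac{q}{p}(\nu^{(1)}_k+\varepsilon)-\ell_\infty-\ell_0+\varepsilon\Big]\|u\|_q^q+C\|\nabla u\|_q^{s+q}.
$$
By~\eqref{eq:cond-l0>0} the bracket is strictly negative once $\varepsilon$ is small enough, so $I(u)\le-\mu\|u\|_q^q+C\|\nabla u\|_q^{s+q}$ on $V$ for some $\mu>0$; in the case $\ell_0=+\infty$ one instead takes $M$ large and the bracket is negative for every $k\in\N$.

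Finally, on $S_\rho\cap V$ one has $\|\nabla u\|_q=\rho$ and, again by~\eqref{eq:nuh-def}, $\|u\|_q^q\ge\rho^q/(\nu^{(1)}_k+\varepsilon)$, whence $I(u)\le\rho^q\big(-\mu/(\nu^{(1)}_k+\varepsilon)+C\rho^s\big)\le-c_0<0$ once $\rho$ is chosen small enough, which is exactly $-I(u)\ge c_0$ on $S_\rho\cap V$ ($V$ being finite dimensional is automatically closed). I do not expect a genuine obstacle here; the only slightly delicate point, as in Lemma~\ref{lem:palletta>0}, is promoting the purely local information $(f_0)$ to the global lower bound for $F$ displayed above, so that no pointwise control of $u$ is needed, while everything else is a bookkeeping of the small-$\rho$ asymptotics, with the term $C\|\nabla u\|_q^{s+q}$ negligible against $\|u\|_q^q$ precisely because of the norm comparison encoded in $\nu^{(1)}_k$.
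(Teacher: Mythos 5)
Your proposal is correct and follows essentially the same route as the paper: a global lower bound $F(x,t)\ge\frac{\ell_0-\varepsilon}{q}|t|^q-C_\varepsilon|t|^{s+q}$ obtained from $(f_0)$, $(f)$ and $(f_\infty)$, the choice of a $k$-dimensional subspace via the variational characterization \eqref{eq:nuh-def} of $\nu^{(1)}_k$, the bound $\frac1p\|\nabla u\|_p^p+\frac1q\|\nabla u\|_q^q\le\frac1p(\|\nabla u\|_p^p+\|\nabla u\|_q^q)$, and the small-$\rho$ asymptotics on $S_\rho\cap V$, with the same treatment of the case $\ell_0=+\infty$ via an arbitrary $M$. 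Your restriction $s\in(0,q^*-q)$ together with the Sobolev embedding is a slightly more careful handling of the remainder term than the paper's, but it does not change the argument.
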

\begin{proof} $\bullet$ We first consider the case $\ell'_0<\infty$. By $(f)$, $(f_\infty)$, and $(f_0^+)$, for every $\varepsilon>0$ there exists $a'_\varepsilon>0$ such that  
\begin{equation}\label{eq:conseq-hp-f>}
F(x,t)\ge \frac{\ell'_0-\varepsilon}{p}|t|^p-a'_\varepsilon |t|^{q}\quad \mbox{for a.e. }x\in \Omega \mbox{ and every }t\in\R.
\end{equation}
Therefore, fix $0<\varepsilon<(\ell'_0-\nu^{(1)}_k)/2$ to get for every $u\in W^{1,q}_0(\Omega)$
\begin{align*}
-I(u) \ge -\frac{1}{p}\|\nabla u\|_p^p-\frac{1}{q}\|\nabla u\|_q^q + \frac{\ell_\infty}{q}\|u\|_q^q + \frac{\ell'_0-\varepsilon}{p}\|u\|_p^p - a'_{\varepsilon}\|u\|_{q}^{q}.
\end{align*}
By \eqref{eq:nuh-def}, there exists a $k$-dimensional closed subspace $V_k^{\varepsilon,(1)}\subset W^{1,q}_0(\Omega)$ such that for every $u\in V_k^{\varepsilon,(1)}$
\begin{equation}\label{eq:disug-inV'_k}
(\nu^{(1)}_k+\varepsilon)\|u\|_p^p >\|\nabla u\|_p^p.
\end{equation}
Now, let $\rho>0$. For every $u\in V_k^{\varepsilon,(1)}\cap S_\rho$ it holds
\begin{align*}
-I(u)&\ge \frac{1}{p}\left(\frac{\ell'_0-\varepsilon}{\nu^{(1)}_k+\varepsilon}-1\right)\|\nabla u\|_p^p-\frac{1}{q}\|\nabla u\|_q^q - \left(a'_{\varepsilon}-\frac{\ell_\infty}{q}\right)\|u\|_{q}^{q}\\
&\ge \frac{1}{p}\left(\frac{\ell'_0-\varepsilon}{\nu^{(1)}_k+\varepsilon}-1\right)C\rho^p-\left[\frac{1}{q} + \left(a'_{\varepsilon}-\frac{\ell_\infty}{q}\right)C'\right]\rho^{q},
\end{align*}
where $C,\,C'>0$ are two constants arising in the equivalence of the norms in the finite dimensional space $V_k^{\varepsilon,(1)}$.
Since $p<q$ and in view of the choice of $\varepsilon$, there exists $\rho>0$ so small that for every $u\in V_k^{\varepsilon,(1)}\cap S_\rho$, 
$$
-I(u)\ge c_0\quad\mbox{for some }c_0>0
$$
and the conclusion follows taking $V=V_k^{\varepsilon,(1)}$.\smallskip

$\bullet$ If $\ell'_0=\infty$, clearly condition $\ell'_0>\nu^{(1)}_k$ is satisfied for every $k\in\N$. Again, by the assumptions on $f$, for every $M>0$, there exists $a'_M>0$ such that 
$$
F(x,t)\ge \frac{M}{p}|t|^p - a'_M |t|^{q}\quad \mbox{for a.e. }x\in \Omega \mbox{ and every }t\in\R.
$$
Thus for every $k$, choosing $M>\nu^{(1)}_k$, we get for every $u\in W^{1,q}_0(\Omega)$
$$
-I(u)\ge -\frac{1}{p}\|\nabla u\|_p^p -\frac{1}{q}\|\nabla u\|_q^q +\frac{\ell_\infty}{q}\|u\|_q^q +\frac{M}{p}\|u\|_p^p - a'_M\|u\|_{q}^{q}.
$$
Now, fix $0 < \varepsilon < M -\nu^{(1)}_k$. By \eqref{eq:nuh-def}, there exists a $k$-dimensional closed subspace $V_k^{\varepsilon,(1)}\subset W^{1,q}_0(\Omega)$ such that \eqref{eq:disug-inV'_k} holds.
Therefore, for $\rho>0$, 
\[
-I(u)\ge \frac{1}{p}\left(\frac{M}{\nu^{(1)}_k+\varepsilon}-1\right)C\rho^p-\left[\frac{1}{q}+ \left(a'_M-\frac{\ell_\infty}{q}\right)C'\right]\rho^{q}\;\mbox{ for every $u\in V_k^{\varepsilon,(1)}\cap S_\rho$,}
\]
where again $C,\,C'>0$ are two constants arising in the equivalence of the norms in the finite dimensional space $V_k^{\varepsilon,(1)}$.
Then, taking $\rho>0$ sufficiently small, the conclusion follows as in the previous case.
\end{proof}

\begin{lemma}\label{lem:to-infty>0}
Assume that  $(f)$ and $(f_\infty)$ hold. If, for some $h\in\N$, $\ell_\infty<\eta^{(0)}_h$, there exist a closed subspace $W\subset W^{1,q}_0(\Omega)$ of codimension $h-1$, and a constant $c_\infty\in (c_0,\infty)$ such that 
$$
-I(u)\le c_\infty\quad\mbox{for every }u\in W.
$$
\end{lemma}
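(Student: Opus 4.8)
The plan is to mirror the argument of Lemma~\ref{lem:to-infty}, working this time on the infinite--codimensional side: I take $W$ to be the closed subspace $W=W_{h-1}=W_{h-1}^{(0)}$ defined in \eqref{eq:Wh-def} with $\alpha=0$. By Lemma~\ref{lem:primadec} it has $\mathrm{codim}\,W_{h-1}=h-1$, and by \eqref{eq:disug-Wh-alpha0} with $\alpha=0$ it satisfies
$$
\eta_h\|u\|_q^q\le\|\nabla u\|_q^q\qquad\text{for every }u\in W_{h-1},
$$
where $\eta_h=\eta_h^{(0)}$.

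Next I bound $-I$ from above. By $(f)$ and $(f_\infty)$, given $\varepsilon>0$ there is $A_\varepsilon>0$ with $|f(x,t)|\le\varepsilon|t|^{q-1}+A_\varepsilon$ (this is \eqref{eq:conseq-f-infty}), hence $F(x,t)\le\frac{\varepsilon}{q}|t|^q+A_\varepsilon|t|$, and by \eqref{eq:embed-Wq-in-Wr} with $r=1$ one gets $\int_\Omega F(x,u)\,dx\le\frac{\varepsilon}{q}\|u\|_q^q+C\|u\|_q$. Discarding the nonpositive term $-\frac1p\|\nabla u\|_p^p$ in the definition of $-I$, I obtain, for every $u\in W^{1,q}_0(\Omega)$,
$$
-I(u)\le-\frac1q\|\nabla u\|_q^q+\frac{\ell_\infty+\varepsilon}{q}\|u\|_q^q+C\|u\|_q.
$$
Restricting to $u\in W_{h-1}$ and using $\|u\|_q^q\le\eta_h^{-1}\|\nabla u\|_q^q$ together with Poincar\'e's inequality $\|u\|_q\le C_P\|\nabla u\|_q$, this becomes
$$
-I(u)\le\frac1q\left(\frac{\ell_\infty+\varepsilon}{\eta_h}-1\right)\|\nabla u\|_q^q+C\|\nabla u\|_q.
$$
Since $\ell_\infty<\eta_h$, I fix $\varepsilon>0$ so small that $\ell_\infty+\varepsilon<\eta_h$; then the coefficient of $\|\nabla u\|_q^q$ equals $-\delta$ for some $\delta>0$, and the real function $t\mapsto-\delta t^q+Ct$ is bounded above on $[0,\infty)$. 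Hence $-I$ is bounded above on $W_{h-1}$, and after enlarging the bound if necessary to make it larger than $c_0$ one obtains the desired constant $c_\infty\in(c_0,\infty)$ with $-I(u)\le c_\infty$ for every $u\in W_{h-1}$, which proves the claim with $W=W_{h-1}$.

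There is no real obstacle here: the estimate is routine and entirely parallel to Lemmas~\ref{lem:to-infty} and~\ref{lem:palletta>0}. The only points that require a bit of care are that the strict inequality $\ell_\infty<\eta_h^{(0)}$ is exactly what makes the leading coefficient negative after the $\varepsilon$ has been absorbed, and that the term $-\frac1p\|\nabla u\|_p^p$ may be simply dropped because it is nonpositive (so that the geometry on $W$ is governed only by the $q$--quantities and by $\eta_h^{(0)}$, consistently with the statement). As in the other geometry lemmas of this subsection, assumption $(f_0)$ is not actually used in the proof and is listed only for uniformity.
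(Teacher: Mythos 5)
Your proof is correct and follows essentially the same route as the paper's: the same choice $W=W^{(0)}_{h-1}$, the same use of \eqref{eq:conseq-f-infty}, \eqref{eq:disug-Wh-alpha0} and Poincar\'e's inequality, the only cosmetic difference being that you phrase the final one-variable bound in terms of $\|\nabla u\|_q$ rather than $\|u\|_q$. One tiny caveat: replacing $\|u\|_q^q$ by $\eta_h^{-1}\|\nabla u\|_q^q$ in the term $\frac{\ell_\infty+\varepsilon}{q}\|u\|_q^q$ presupposes $\ell_\infty+\varepsilon\ge 0$; if instead $\ell_\infty+\varepsilon<0$ that term is nonpositive and can simply be dropped, so the conclusion is unaffected.
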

\begin{proof}
By \eqref{eq:conseq-f-infty}, for every $\varepsilon>0$ and $u\in W^{1,q}_0(\Omega)$, we have 
\begin{align*}
-I(u)&\le -\frac{1}{p}\|\nabla u\|_p^p-\frac{1}{q}\|\nabla u\|_q^q+\frac{\ell_\infty+\varepsilon}{q}
\|u\|_q^q+A_\varepsilon\|u\|_1\\
&\le -\frac{1}{q}\|\nabla u\|_q^q+\frac{\ell_\infty+\varepsilon}{q}
\|u\|_q^q+C\|u\|_q.
\end{align*}
Now, by \eqref{eq:disug-Wh-alpha0}, for every $u\in W^{(0)}_{h-1}$
$$
-I(u)\le -\frac{1}{q}\left(\eta^{(0)}_h-\ell_\infty-\varepsilon\right)\|u\|_q^q+C\|u\|_q.
$$
Hence, being by assumption $\ell_\infty<\eta^{(0)}_h$, for $0<\varepsilon<\eta_h^{(0)}-\ell_\infty$ we have that $-I|_{W^{(0)}_{h-1}}(u)\to-\infty$ as $\|u\|_q\to\infty$. Being $W^{(0)}_{h-1}$ a closed subspace of codimension $h-1$, the proof is concluded by taking $W:=W^{(0)}_{h-1}$ and enlarging if necessary $c_\infty$ to make it greater than $c_0$.
\end{proof}

We are now ready to prove the second part of Theorem \ref{thm:main}.

\begin{proof}[$\bullet$ Proof of Theorem \ref{thm:main}-$(H_+)$] In view of Lemmas \ref{lem:CPS}, \ref{lem:palletta>0>0} and \ref{lem:to-infty>0}, we can apply Theorem \ref{thm:multiplicity} to the functional $-I$, with $X=W^{1,q}_0(\Omega)$, $W=W^{(0)}_{h-1}$, and $V=V_k^{\varepsilon,(1)}$.
\end{proof}
 
\begin{remark}\label{rmk:lastremark}
It has been proved in \cite{BCS} that the sequences $(\eta^{(0)}_h)$ and $(\nu^{(0)}_h)$ are increasing and divergent and that,  for every $h\in\N$, $\eta^{(0)}_h\le\nu^{(0)}_h$. Therefore, the two conditions required in $(H_-)$, when $\ell_0<0$, can be written as the following chain of inequalities 
$$
\ell_\infty+\ell_0<\eta^{(0)}_h \le\eta^{(0)}_k\le\nu^{(0)}_k<\ell_\infty
$$ 
from which it becomes apparent that for $\ell_0=0$ they are never compatible. On the other hand, if we take into account Remark \ref{rmk:remark-eta1}, under the weaker condition \eqref{eq:alternative}, in general we cannot exclude that the intersection of the two conditions in $(H_-)$, when $\ell_0=0$, is empty.
%Similarly, it is not known whether or not there is some order relation between $\eta^{(0)}_h$ and $\nu^{(1)}_h$. Thus, we cannot exclude that the intersection of the two conditions in $(H_+)$ when $\ell_0=0$ (i.e., $\ell_\infty<\eta^{(0)}_h$ and $\ell_\infty>\frac{q}{p}\nu^{(1)}_k$) is not empty.
\end{remark}

\section*{Acknowledgments}
\noindent 
The author wishes to thank Professors Vladimir Bobkov and Mieko Tanaka for pointing out a mistake in the original version of the paper and for giving insights into possible solutions, and Professors Rossella Bartolo, Anna Maria Candela, and Addolorata Salvatore for useful discussions and help. 
%
%The author wishes to thank Professor Rossella Bartolo for useful discussions on asymptotically linear problems and for pointing to her the references \cite{BCS,CP}. The author would like to thank also the anonymous referees for their valuable comments which helped to improve the manuscript.
%This research was partially supported by the INdAM - GNAMPA Project 2020 ``Problemi ai limiti per l'equazione della curvatura media prescritta''. \\
%There are no conflicts of interest to this work.
\bibliographystyle{abbrv}
%\bibliography{biblio}
\def\cprime{$'$}

\end{document}